\providecommand{\U}[1]{\protect\rule{.1in}{.1in}}
\newtheorem{theorem}{Theorem}[section]
\newtheorem{proposition}[theorem]{Proposition}
\newtheorem{corollary}[theorem]{Corollary}
\newtheorem{example}[theorem]{Example}
\newtheorem{lemma}[theorem]{Lemma}
\newtheorem{final remark}[theorem]{Final Remark}
\newtheorem{definition}[theorem]{Definition}
\begin{document}

\title{The positive polynomial Schur property in Banach lattices}
\author{Geraldo Botelho\thanks{Supported by CNPq Grant
304262/2018-8 and Fapemig Grant PPM-00450-17.}\,\, and  Jos\'e Lucas P. Luiz\thanks{Supported by a CNPq scholarship\newline 2020 Mathematics Subject Classification: 46B42, 46G25, 46A40 .\newline Keywords: Banach lattices, positive Schur property, regular polynomials, weak Dunford-Pettis property, positive polynomial Schur property.
}}
\date{}
\maketitle

\begin{abstract} We study the class of Banach lattices that are positively polynomially Schur. Plenty of examples and counterexamples are provided, lattice properties of this class are proved, arbitrary $L_p(\mu)$-spaces, $1 \leq p < \infty$, are shown to be positively polynomially Schur, lattice analogues of results on Banach spaces are obtained and relationships with the positive Schur and the weak Dunford-Pettis properties are established.
\end{abstract}

\section{Introduction}

In the realm of Banach spaces, the polynomial Schur property (polynomially null sequences are norm null) was introduced by Carne, Cole and Gamelin \cite{carne} and developed by several authors (see, e.g., \cite{aron, arranz, farmer,jaramillo}). Banach spaces with this property are called $\Lambda$-spaces or polynomially Schur spaces. Of course this property is related to the Schur property (weakly null sequences are norm null). It is clear that Banach spaces with the Schur property are polynomially Schur and it is well known that a Banach space has the Schur property if and only if it is polynomially Schur and has the Dunford-Pettis property (see \cite[Theorem 7.5]{carne}).

In the setting of Banach lattices, the positive Schur property (weakly null sequences formed by positive vectors are norm null) has been extensively studied, recent developments can be found in \cite{arkadani, baklouti, botelho, chen, tradacete, wnuk4, zeekoei}. So it is a natural step to consider the lattice counterpart of the polynomial Schur property, which is the main subject of this paper.

\begin{definition}\rm We say that a Banach lattice $E$ has the {\it positive polynomial Schur property} if every sequence $(x_j)_{j=1}^\infty$ in $E$ formed by positive vectors such that $P(x_j) \longrightarrow 0$ for every regular homogeneous polynomial $P$ on $E$ is norm null.
\end{definition}

A Banach lattice with the positive polynomial Schur property shall be called a {\it PPS lattice} (positively polynomially Schur lattice).

In Section 2 we give several examples and counterexamples, showing that the class of PPS lattices is neither very small nor very large, in the sense that good examples are available and nice general properties can be found. Some of this properties, especially lattice properties, are obtained in this same section. Section 3 is devoted to the proof that arbitrary $L_p(\mu)$-spaces, $1 \leq p < \infty$, are PPS lattices and to consequences of this fact. In Section 4 we establish the interplay between the positive polynomial Schur and the weak Dunford-Pettis properties. We prove the lattice analogue of the equivalence stated in the first paragraph of this Introduction for Banach spaces. Some consequences are derived.

By $E^*$ we denote the topological dual of the Banach space $E$. Remember that a homogeneous polynomial on a Riesz space is {\it positive} if its associated symmetric multilinear form is positive, and that the difference of two positive homogeneous  polynomials is called a {\it regular} homogeneous polynomial. % For the theory of regular homogeneous polynomials on Riesz spaces we refer the reader to \cite{loane}.
By $\mathcal{P}^r(^nE)$ we denote the set of regular $n$-homogeneous (scalar-valued) polynomials on the Banach lattice $E$, which becomes a Banach lattice with the regular norm
$$\|P\|_r = \| |P| \|, $$
where $|P|$ is the absolute value of the polynomial $P$.
Recall that every polynomial in $\mathcal{P}^r(^nE)$ is continuous  (see \cite[Proposition 4.1]{grecu}).
For the theory of regular homogeneous polynomials on Banach lattices we refer to \cite{bu, loane1}. %\textcolor{red}{Mencionar que consideramos somente reticulados de Banach reais??}

\section{Basic properties and examples}

We start with some examples.

\begin{example}\label{L1Plam} \rm (a) Every Banach lattice with the positive Schur property is a PPS lattice. Indeed, this follows immediately from the fact that bounded linear functionals are regular, that is, $E^*=\mathcal{P}^r(^1E)$. Since every AL-space has the positive Schur property \cite[Example 1.3(a), p.\,161]{rabiger}, it follows that every AL-space is a PPS lattice. In particular, $L_1(\mu)$ is a PPS lattice for any measure $\mu$.\\
% (veja \cite[pág. 16, Examples 1]{wnuk}).\\
%(b) \textcolor{red}{Every polynomially Schur Banach lattice $E$ for which all continuous homogeneous polynomials on $E$ are regular is a PPS lattice. This gives, for example, an alternative proof of the fact that $\ell_1$ is a PPS lattice (to see that all homogeneous polynomials on $\ell_1$ are regular \cite[Theorem 4.11]{bu1} OBS: esse Theorem 4.11 diz algo mais forte. Ele diz que todo polinômio contínuo definido em um AL-espaço é regular.}).\\
(b) $L_1[0,1]$ is a PPS lattice that fails to be a polynomially Schur Banach space (see \cite[Theorem 6.5]{carne}).\\
(c) Not only AL-spaces are PPS lattices: it follows from what was shown in \cite[Example 2.8]{primeiro} that the Banach lattice $\big(\oplus_{n=1}^\infty\ell^n_\infty \big)_1$ has the positive Schur property, so it is a PPS lattice, and that it is not an AL-space.
\end{example}

In Section 3 we shall provide examples of infinite dimensional reflexive PPS lattices, establishing, in particular, that not only Banach lattices with the positive Schur property are PPS lattices. Next result gives a lot of counterexamples. Recall that a Banach space $E$ has the Dunford-Pettis property if all weakly compact operators defined on $E$ are completely continuous.

\begin{proposition}\label{fpro} Banach lattices with the Dunford-Pettis property and lacking the positive Schur property are not PPS lattices. In particular, {\rm AM}-spaces are not PPS lattices.
\end{proposition}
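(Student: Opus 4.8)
The plan is to recognise that the failure of the positive Schur property already produces, in the presence of the Dunford--Pettis property, a witness for the failure of the positive polynomial Schur property: in a space with the Dunford--Pettis property regular polynomials act on weakly null sequences exactly as continuous linear functionals do, namely they kill them.

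So I would start by fixing a Banach lattice $E$ with the Dunford--Pettis property that lacks the positive Schur property. By definition there is a sequence $(x_j)_{j=1}^\infty$ of positive vectors in $E$ with $x_j \to 0$ weakly and $\|x_j\| \not\to 0$; after passing to a subsequence I may assume $\|x_j\| \geq \varepsilon$ for all $j$ and some $\varepsilon>0$. The crux is the following known fact: in a Banach space with the Dunford--Pettis property every continuous scalar-valued homogeneous polynomial is weakly sequentially continuous, i.e.\ it maps weakly convergent sequences to convergent sequences. This is exactly the ingredient underlying the Banach space equivalence recalled in the Introduction (see \cite[Theorem 7.5]{carne}, and also \cite{farmer}); if a self-contained treatment is wanted, it is proved by induction on the degree, the degree-one case being trivial and the inductive step passing to the symmetric multilinear form associated with the polynomial and combining the bilinear reformulation of the Dunford--Pettis property with the induction hypothesis. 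Granting this, and recalling that regular homogeneous polynomials are continuous by \cite[Proposition 4.1]{grecu}, I obtain $P(x_j) \to P(0)=0$ for every regular homogeneous polynomial $P$ on $E$. Thus $(x_j)$ is a sequence of positive vectors that is killed by every regular homogeneous polynomial but is not norm null, so $E$ is not a PPS lattice, as claimed.

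For the ``in particular'' assertion I would invoke two standard facts. First, every AM-space has the Dunford--Pettis property: its bidual is an AM-space with unit, hence lattice isometric to some $C(K)$, which has the Dunford--Pettis property, and the Dunford--Pettis property is inherited from the bidual. Second, every infinite-dimensional AM-space contains a normalized positive disjoint sequence, and in an AM-space the closed linear span of such a sequence is lattice isometric to $c_0$; the images of the $c_0$ unit vectors then form a weakly null sequence of positive vectors that is not norm null, so the positive Schur property fails. The first part of the proposition then applies and finishes the proof. (Finite-dimensional lattices are trivially PPS lattices, so the statement is to be read for infinite-dimensional AM-spaces.)

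The main obstacle is the ``crux'' step, the passage from the Dunford--Pettis property to weak sequential continuity of polynomials. Once it is available the rest is routine, but the inductive proof itself is a little delicate: the naive induction produces only $w^{*}$-null sequences of functionals, about which the Dunford--Pettis property says nothing, so one must either argue more carefully or simply appeal to the existing literature on the polynomial Dunford--Pettis and polynomial Schur properties. The two facts about AM-spaces used in the third paragraph are classical.
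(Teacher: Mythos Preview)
Your argument is correct and follows essentially the same line as the paper's proof: both take a positive weakly null non-norm-null sequence furnished by the failure of the positive Schur property, invoke the weak sequential continuity of continuous polynomials on Dunford--Pettis spaces (the paper cites \cite[Proposition 2.34]{dineen} directly rather than \cite{carne} or \cite{farmer}), and for AM-spaces appeal to the standard facts that they have the Dunford--Pettis property and contain lattice copies of $c_0$ (the paper cites \cite[Proposition 3.7.9]{meyer} and \cite[Exercise 16, p.\,254]{aliprantis2}). Your caveat that the ``in particular'' clause must be read for infinite-dimensional AM-spaces is a fair refinement of the paper's blanket statement.
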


\begin{proof} Let $E$ be a such Banach lattice and let $(x_j)_{j=1}^\infty$ be a positive weakly null non-norm null sequence in $E$. Continuous homogeneous polynomial on $E$ are weakly sequentially continuous by \cite[Proposition 2.34]{dineen}, so $ P(x_j) \longrightarrow 0$ for all $P \in {\cal P}^r(^nE)$ and $n \in \mathbb{N}$. It follows that $E$ is not a PPS lattice. AM-spaces contain lattice copies of $c_0$ \cite[Exercise 16, p.\,254]{aliprantis2}, so they lack the positive Schur property, and they have the Dunford-Pettis property \cite[Proposition 3.7.9]{meyer}.
\end{proof}

\begin{example}\label{exAM}\rm $C(K)$-spaces, in particular $c_0$, are not PPS lattices.
\end{example}

To check that a Banach lattice has the positive Schur property, it is enough to consider positive {\it disjoint} weakly null sequences (see \cite[p.\,16]{wnuk}). Next we show that the same holds for the positive polynomial Schur property.

\begin{proposition} A Banach lattice $E$ is a PPS lattice if and only if every disjoint sequence $(x_j)_{j=1}^\infty$ in $E$ formed by positive vectors such that $P(x_j) \longrightarrow 0$ for every regular homogeneous polynomial $P$ on $E$ is norm null.
\end{proposition}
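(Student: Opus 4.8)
The plan is to prove the nontrivial implication; the other one is immediate, since a disjoint sequence is in particular a sequence, so a PPS lattice trivially satisfies the disjoint-sequence condition. Assume that condition and let $(x_j)_{j=1}^\infty$ be a sequence of positive vectors in $E$ with $P(x_j)\to 0$ for every regular homogeneous polynomial $P$. First I would read off the linear case: since $E^*=\mathcal{P}^r(^1E)$, this hypothesis in particular gives $x^*(x_j)\to 0$ for every $x^*\in E^*$, so $(x_j)$ is weakly null and hence norm bounded, say $\|x_j\|\le M$. Now I argue by contradiction: if $(x_j)$ is not norm null, pass to a subsequence (keeping the name $(x_j)$, and noting all the relevant properties pass to subsequences) with $\inf_j\|x_j\|=\varepsilon>0$. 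The key step is then a disjointification, run exactly as in the proof of the corresponding characterization of the positive Schur property (cf.\ \cite[p.\,16]{wnuk}): from a bounded positive weakly null sequence that is not norm null one extracts a subsequence $(x_{j_k})_k$ and a disjoint sequence $(d_k)_k$ of positive vectors with $\|x_{j_k}-d_k\|\to 0$; then $(d_k)$ is norm bounded and, from $\|d_k\|\ge\|x_{j_k}\|-\|x_{j_k}-d_k\|$, it is not norm null.

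Next I would transfer the polynomial condition from $(x_{j_k})$ to $(d_k)$ and close the loop. Given a regular $n$-homogeneous polynomial $P$, $P$ is norm continuous \cite[Proposition 4.1]{grecu}, hence Lipschitz on the bounded set $M B_E$ that contains all the $x_{j_k}$ and $d_k$; so $|P(x_{j_k})-P(d_k)|\le C\,\|x_{j_k}-d_k\|\to 0$, and together with $P(x_{j_k})\to 0$ this gives $P(d_k)\to 0$. (Alternatively, if the disjointification is arranged with $0\le d_k\le x_{j_k}$, one can skip continuity entirely: positive homogeneous polynomials are monotone on the positive cone, so $0\le P(d_k)\le P(x_{j_k})\to 0$ for positive $P$, hence $P(d_k)\to 0$ for every regular $P$ by writing it as a difference of two positive ones.) Thus $(d_k)$ is a disjoint sequence of positive vectors on which every regular homogeneous polynomial tends to $0$; by hypothesis $\|d_k\|\to 0$, contradicting that $\|d_k\|$ stays bounded away from $0$. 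Hence $\|x_j\|\to 0$, i.e.\ $E$ is a PPS lattice.

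The hard part is the disjointification step: producing the almost-disjoint subsequence $(d_k)$ out of an arbitrary bounded positive weakly null sequence that is not norm null. This is precisely the technical input behind the disjoint-sequence characterization of the positive Schur property, and my intention is to reuse it. The remaining pieces — reducing to weak nullity through the degree-one polynomials, transferring $P(x_j)\to 0$ to the disjoint sequence via continuity (or monotonicity), and extracting the contradiction — are routine.
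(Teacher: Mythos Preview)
Your proof is correct and follows essentially the same approach as the paper. The paper uses exactly your alternative route: after normalizing so that $\|z_k\|\ge 1$, it applies \cite[Corollary 2.3.5]{meyer} to obtain a disjoint positive sequence $(y_i)$ with $0\le y_i\le z_{k_i}$ and $\|y_i\|\ge c>0$, and then transfers the polynomial condition via monotonicity of positive polynomials on the positive cone rather than via Lipschitz continuity.
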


\begin{proof} One direction is obvious. To prove the converse, assume that every disjoint sequence $(x_j)_{j=1}^\infty$ in $E$ formed by positive vectors such that $P(x_j) \longrightarrow 0$ for every regular homogeneous polynomial $P$ on $E$ is norm null. Suppose that $E$ fails to be a PPS lattice, that is, there exists a non-norm null sequence $(x_j)_{j=1}^\infty$ in $E$ formed by positive vectors such that $P(x_j) \longrightarrow 0$ for every regular homogeneous polynomial $P$ on $E$. In this case there exists $\varepsilon>0$ and a subsequence $(x_{j_k})_{k=1}^\infty$ of $(x_j)_{j=1}^\infty$ such that $\|x_{j_k}\|\geq\varepsilon$ for every $k\in\mathbb{N}$. Calling $z_k:=(1/\varepsilon) x_{j_k}$ for every $k$, the sequence $(z_k)_{k=1}^\infty$ is formed by positive vectors, $\|z_k\|\geq 1$ for each $k\in\mathbb{N}$ and $P(z_k) = (1/\varepsilon^n) P(x_{j_k})\longrightarrow 0$ for all $P\in\mathcal{P}^r(^nE)$ and $n\in\mathbb{N}$. Since $E^*=\mathcal{P}^r(^1E)$, in particular the sequence $(z_k)_{k=1}^\infty$ is weakly null. Choosing  $0<c<1$, from \cite[Corollary 2.3.5]{meyer} there exists a disjoint sequence $(y_i)_{i=1}^\infty$ formed by positive vectors of $E$ and a subsequence $(z_{k_i})_{i=1}^\infty$ of $(z_k)_{k=1}^\infty$ such that $y_i\leq z_{k_i}$ and $\|y_i\|\geq c$ for every $i\in\mathbb{N}$. In particular the sequence $(y_i)_{i=1}^\infty$ is not norm null. Given $P\in\mathcal{P}^r(^nE)$, $n\in\mathbb{N}$, there are positive $n$-homogeneous polynomials $P_1$ and $P_2$ on $E$ such that $P=P_1-P_2$. By \cite[Proposition 5]{loane1} we know that $P_1$ and $P_2$ are monotone on the positive cone of $E$, that is $P(x) \leq P(y)$ whenever $0 \leq x \leq y$. Therefore,
$$
 0\leq P_m(y_i)\leq P_m(z_{k_i})\longrightarrow 0, ~m=1,2,
$$
from which we get
$$
\|P(y_i)\|=\|P_1(y_i)-P_2(y_i)\|\leq \|P_1(y_i)\|+\|P_2(y_i)\|\longrightarrow 0.
$$
The assumption gives that the sequence $(y_i)_{i=1}^\infty$ is norm null and this contradiction completes the proof.
\end{proof}

According to \cite{botelho}, a Banach lattice $F$ is {\it positively isomorphic} to a subspace of the Banach lattice $E$ if there exists a positive operator from $F$ to $E$ that is an into isomorphism (in the sense of Banach spaces).

\begin{proposition}\label{propsub} {\rm (a)} If the Banach lattice $F$ is positively isomorphic to a subspace of a PPS lattice, then $F$ is a PPS lattice.\\
{\rm (b)} A Banach lattice that is lattice isomorphic to a PPS lattice is a PPS lattice as well.\\
{\rm (c)} Sublattices of PPS lattices are PPS lattices.
\end{proposition}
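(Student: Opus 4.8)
The plan is to deduce all three items from part (a). Indeed, a lattice isomorphism $T \colon F \to E$ is in particular a positive operator and an into isomorphism in the sense of Banach spaces (it is even onto $E$), so (b) follows from (a) applied to $T$; and the canonical inclusion of a sublattice $F$ of a PPS lattice $E$ is an isometric lattice homomorphism, hence again a positive into isomorphism, so (c) follows from (a) as well. Thus I would first record these two reductions and then concentrate on proving (a).

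For (a), let $T \colon F \to E$ be a positive operator that is an isomorphism onto its range, with $E$ a PPS lattice, and let $(x_j)_{j=1}^\infty$ be a sequence of positive vectors in $F$ such that $Q(x_j) \longrightarrow 0$ for every regular homogeneous polynomial $Q$ on $F$; the goal is to show $\|x_j\| \longrightarrow 0$. Since $T$ is positive, $(Tx_j)_{j=1}^\infty$ is a sequence of positive vectors in $E$. The crucial step, carried out below, is to check that $P \circ T \in \mathcal{P}^r(^nF)$ for every $n \in \mathbb{N}$ and every $P \in \mathcal{P}^r(^nE)$. Granting this, the hypothesis on $(x_j)_{j=1}^\infty$ gives $(P \circ T)(x_j) = P(Tx_j) \longrightarrow 0$ for every regular homogeneous polynomial $P$ on $E$; since $E$ is a PPS lattice, $\|Tx_j\| \longrightarrow 0$; and since $T$ is bounded below, say $\|Ty\| \geq c\|y\|$ for all $y \in F$ with some $c > 0$, we conclude that $\|x_j\| \longrightarrow 0$.

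It then remains to justify the claim that $P \circ T \in \mathcal{P}^r(^nF)$ whenever $P \in \mathcal{P}^r(^nE)$, which I regard as the only non-routine point. That $P \circ T$ is an $n$-homogeneous continuous polynomial on $F$ is clear, since $P$ is continuous and $T$ is linear and continuous; moreover, if $\check{P}$ denotes the symmetric $n$-linear form associated to $P$, then $(x_1, \dots, x_n) \mapsto \check{P}(Tx_1, \dots, Tx_n)$ is a symmetric $n$-linear form whose diagonal is $P \circ T$, so by uniqueness it is precisely the symmetric $n$-linear form associated to $P \circ T$. For regularity, write $P = P_1 - P_2$ with $P_1, P_2$ positive $n$-homogeneous polynomials on $E$, as in the definition of a regular polynomial. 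Since each $P_m$ is positive, its associated symmetric $n$-linear form takes nonnegative values on tuples of positive vectors of $E$; and since $T$ is positive, it sends positive vectors of $F$ to positive vectors of $E$. Combining these two facts, the symmetric $n$-linear form of $P_m \circ T$, namely $(x_1, \dots, x_n) \mapsto \check{P}_m(Tx_1, \dots, Tx_n)$, takes nonnegative values on tuples of positive vectors of $F$, so $P_m \circ T$ is a positive polynomial; hence $P \circ T = P_1 \circ T - P_2 \circ T$ is regular. This verification — that precomposition with a positive operator preserves positivity of homogeneous polynomials — is the heart of the proof; it rests precisely on the characterization of positive polynomials through positivity of their associated symmetric multilinear forms, and it makes no use of the range of $T$ being a sublattice, which is exactly why the statement holds for arbitrary positively isomorphic subspaces and in particular yields (c).
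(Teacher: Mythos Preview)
Your proof is correct and follows essentially the same route as the paper's: both reduce (b) and (c) to (a), and both prove (a) by noting that $P\circ T\in\mathcal{P}^r(^nF)$ for every $P\in\mathcal{P}^r(^nE)$, applying the PPS property of $E$ to the positive sequence $(Tx_j)$, and then using that $T$ is bounded below. The only difference is that you spell out in detail why precomposition with a positive operator preserves regularity of polynomials, whereas the paper simply asserts this fact without further comment.
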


\begin{proof} (a) Let $E$ be a PPS lattice and $T \colon F \longrightarrow E$ be a positive into isomorphism. Given a positive sequence $(x_j)_{j=1}^\infty$ in $F$ such that $Q(x_j)\longrightarrow 0$ for all $Q\in\mathcal{P}^r(^nF)$ and $n\in\mathbb{N}$, we have $(P\circ T)(x_j)\longrightarrow 0$ for all $P\in\mathcal{P}^r(^nE)$ and $n\in\mathbb{N}$, because $(P\circ T)\in\mathcal{P}^r(^nF)$. Since $(T(x_j))_{j=1}^\infty$ is a positive sequence, the PPS property of $E$ gives $T(x_j)\longrightarrow 0$. Hence  $x_j=T^{-1}\circ T(x_j)\longrightarrow 0$ once $T^{-1} \colon T(F) \longrightarrow F$ is a bounded operator.\\
(b) and (c) follow from (a).
\end{proof}

\begin{example}\rm Here we see a space that both it and its dual are not PPS lattices. The Schreier space $S$ is a Banach lattice with the order given by its  1-unconditional basis (see \cite[Proposition 0.4(iii)]{casazza}). In  \cite[Proposition 0.7]{casazza} we can see that $S$ contains an isometric copy of  $c_0$, then it contains an isometric lattice copy of $c_0$ \cite[Theorem 4.61]{aliprantis2}. Example \ref{exAM} and Proposition \ref{propsub} yield that $S$ is not a PPS lattice.

Consider the dual $S^*$ of the Schreier space as a Banach lattice with its dual structure. The sequence $(e_j)_{j=1}^\infty$ formed by the canonical unit vectors of sequence spaces is polynomially null in $S^*$ (see the proof of \cite[Theorem 3.2]{castillo2}), therefore  $P(e_j)\longrightarrow 0$ for all $P\in \mathcal{P}^r(^nS^*)$ and $n\in\mathbb{N}$. It follows that $S^*$ is not a PPS lattice either.
\end{example}

Now we show that the $\ell_p$-sum of finitely many PPS Banach lattices is a PPS lattice.

\begin{proposition} Let $E$ and $F$ be PPS lattices. Then $E\times F$ is PPS lattice with the coordinatewise order and any lattice norm that makes it a Banach lattice. In particular, $E \oplus_p F$ is a PPS lattice for any $1 \leq p \leq \infty$.
\end{proposition}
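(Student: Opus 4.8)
The plan is to pull everything back to the two factors via the coordinate projections. Start with a positive sequence $z_j=(x_j,y_j)$ in $E\times F$ such that $P(z_j)\longrightarrow 0$ for every regular homogeneous polynomial $P$ on $E\times F$. Because the order on $E\times F$ is coordinatewise, $z_j\geq 0$ forces $x_j\geq 0$ in $E$ and $y_j\geq 0$ in $F$, so $(x_j)_{j=1}^\infty$ and $(y_j)_{j=1}^\infty$ are positive sequences. I will show that each of these is norm null in its own space; then, since the inclusions $E\to E\times F$, $x\mapsto (x,0)$, and $F\to E\times F$, $y\mapsto (0,y)$, are positive operators between Banach lattices, hence bounded, we get $\|z_j\|\leq\|(x_j,0)\|+\|(0,y_j)\|\leq C_1\|x_j\|_E+C_2\|y_j\|_F\longrightarrow 0$ for the given lattice norm, which is exactly what we want.

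The key point is a lifting lemma: every regular $n$-homogeneous polynomial $Q$ on $E$ gives rise, via the projection $\pi_E\colon E\times F\to E$, to a regular $n$-homogeneous polynomial $Q\circ\pi_E$ on $E\times F$. To see this, write $Q=Q_1-Q_2$ with $Q_1,Q_2$ positive $n$-homogeneous polynomials on $E$. For $m=1,2$, the composition $Q_m\circ\pi_E$ is an $n$-homogeneous polynomial on $E\times F$ whose associated symmetric $n$-linear form is $\check{Q}_m(\pi_E\,\cdot\,,\dots,\pi_E\,\cdot\,)$, where $\check{Q}_m$ is the symmetric form associated with $Q_m$. Since $\pi_E$ maps the positive cone of $E\times F$ into the positive cone of $E$ and $\check{Q}_m$ is positive, this form is positive, so $Q_m\circ\pi_E$ is a positive $n$-homogeneous polynomial; hence $Q\circ\pi_E=Q_1\circ\pi_E-Q_2\circ\pi_E$ is regular on $E\times F$. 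The same works for $\pi_F$.

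Now the hypothesis does the rest: for each $n\in\mathbb{N}$ and each regular $n$-homogeneous polynomial $Q$ on $E$ we have $Q(x_j)=(Q\circ\pi_E)(z_j)\longrightarrow 0$, so, $(x_j)_{j=1}^\infty$ being a positive sequence in the PPS lattice $E$, it is norm null; symmetrically $(y_j)_{j=1}^\infty$ is norm null in $F$, and by the first paragraph $(z_j)_{j=1}^\infty$ is norm null in $E\times F$, which proves the general statement. Finally, $E\oplus_p F$ for $1\leq p\leq\infty$ carries the coordinatewise order together with the lattice norm $\|(x,y)\|_p$ (with the sup for $p=\infty$), which makes it a Banach lattice, so the ``in particular'' part is the special case just proved. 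I do not expect a genuine obstacle here; the only spots requiring a little care are the observation that norm nullity in $E\times F$ is equivalent to coordinatewise norm nullity (handled by boundedness of the positive projections and inclusions, regardless of which admissible lattice norm is used) and the verification that composing with $\pi_E$ preserves positivity at the level of associated symmetric multilinear forms — both short.
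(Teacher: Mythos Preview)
Your proof is correct and follows essentially the same route as the paper: compose regular polynomials on the factors with the positive projections $\pi_E,\pi_F$ to obtain regular polynomials on $E\times F$, use the PPS property of $E$ and $F$ to get coordinatewise norm nullity, and then pass back to the product norm. The only cosmetic difference is that the paper handles the last step by invoking the equivalence of all Banach-lattice norms on $E\times F$ (comparing against the $\ell_1$-sum norm), whereas you use boundedness of the positive inclusions $x\mapsto(x,0)$ and $y\mapsto(0,y)$; these are two phrasings of the same fact.
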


\begin{proof} Let $\|\cdot\|_{E\times F}$ be a norm on $E \times F$ that makes it a Banach lattice. The coordinatewise order guarantees that the projections
$$\pi_E\colon E\times F\longrightarrow E~,~\pi_E(x,y)=x,$$
$$\pi_F\colon E\times F\longrightarrow F~,~\pi_F(x,y)=y,$$
are positive linear operators, hence continuous. Let $(x_j,y_j)_{j=1}^\infty$ be a positive sequence in $E\times F$ such that $P(x_j,y_j)\longrightarrow 0$ for all $P\in\mathcal{P}^r(^nE \times F)$ and $n\in\mathbb{N}$. For any polynomials $Q_E\in\mathcal{P}^r(^n E)$ and $Q_F\in\mathcal{P}^r(^n F)$, $Q_E\circ \pi_E$ and $Q_F\circ \pi_F$ are regular homogeneous polynomials on $E\times F$, therefore $$Q_E(x_j)=(Q_E\circ\pi_E)(x_j,y_j)\longrightarrow 0 {\rm ~and~} Q_F(y_j)=(Q_F\circ\pi_F)(x_j,y_j)\longrightarrow 0.$$
Since $E$ and $F$ are PPS lattices, $x_j\longrightarrow 0$ in $E$ and $y_j\longrightarrow 0$ in $F$. The equivalence of all norms that make a Riesz space a Banach lattice gives a constant $C> 0$ such that $\|\cdot\|_{E\times F} \leq C \|\cdot\|_1$. Then,  $$0 \leq \|(x_j,y_j)_{j=1}^\infty\|_{E \times F}\leq C\|(x_j,y_j)_{j=1}^\infty\|_1 = C\left(\|x_j\|_E+\|y_j\|_F\right)\longrightarrow 0.$$
\end{proof}

Next we show that PPS lattices enjoy some nice properties.

\begin{proposition}\label{proorp} Every PPS lattice is a KB-space, hence has order continuous norm, is weakly sequentially complete and is Dedekind complete.
\end{proposition}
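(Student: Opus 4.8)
The plan is to isolate the one substantive implication, namely that every PPS lattice is a KB-space; the other three assertions are then standard consequences: every KB-space has order continuous norm (an elementary argument from the convergence of norm-bounded increasing positive sequences, or see \cite{meyer}), every Banach lattice with order continuous norm is Dedekind complete \cite{aliprantis2}, and every KB-space is weakly sequentially complete \cite{meyer}. So after proving ``PPS $\Rightarrow$ KB'', I would simply quote these three facts.

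To prove that a PPS lattice $E$ must be a KB-space, I would argue by contradiction, using the well-known characterization that a Banach lattice is a KB-space if and only if it contains no closed sublattice lattice isomorphic to $c_0$ (see \cite{meyer}). Suppose $E$ is a PPS lattice that is not a KB-space. Then there exist a closed sublattice $F$ of $E$ and a lattice isomorphism $T\colon F \longrightarrow c_0$; since positive operators between Banach lattices are continuous, $T$ and $T^{-1}$ are topological isomorphisms as well, so $F$ is, in particular, lattice isomorphic to $c_0$ in the sense of Proposition \ref{propsub}. Being a closed sublattice of the PPS lattice $E$, $F$ is a PPS lattice by Proposition \ref{propsub}(c); applying Proposition \ref{propsub}(b) to $T$, we conclude that $c_0$ is a PPS lattice, contradicting Example \ref{exAM}. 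Hence $E$ is a KB-space, and the proof is complete.

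The key point is that essentially all of the work has already been carried out in the excerpt: Example \ref{exAM} identifies $c_0$ as a non-PPS lattice, and Proposition \ref{propsub} transports the PPS property through closed sublattices and lattice isomorphisms, so the $c_0$-characterization of KB-spaces does the rest. The only genuine issue is invoking the correct characterization of KB-spaces; there is no computational obstacle. One small point to watch is that the sublattice furnished by the characterization must be closed, so that it is a Banach lattice and Proposition \ref{propsub}(c) applies; and if one prefers the version of the characterization stated in terms of isomorphic rather than lattice copies of $c_0$, one must first invoke the classical fact that a Banach lattice containing an isomorphic copy of $c_0$ contains a sublattice isomorphic to $c_0$, and then argue as above. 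A more direct attempt---trying to build a positive, polynomially null, non-norm-null sequence straight from a bad norm-bounded increasing sequence---seems less convenient, because regular polynomials are nonlinear and do not interact well with the differences of such a sequence, which is exactly why the $c_0$-sublattice route is preferable.
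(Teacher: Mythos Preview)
Your proposal is correct and follows essentially the same route as the paper's own proof: both combine Example~\ref{exAM} (that $c_0$ is not a PPS lattice) with Proposition~\ref{propsub}(b),(c) and the $c_0$-characterization of KB-spaces (the paper cites \cite[Theorem~4.60]{aliprantis2}), then quote standard references for the remaining three properties. The only cosmetic difference is that you phrase the argument by contradiction whereas the paper states it contrapositively in a single line.
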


\begin{proof} Let $E$ be a PPS lattice. Combining Example \ref{exAM} and Proposition \ref{propsub}(b) and (c) it follows that $E$ does not contain a lattice copy of $c_0$. By \cite[Theorem 4.60]{aliprantis2} we get that $E$ is a KB-space. For the other properties, see, respectively, \cite[p.\,232]{aliprantis2}, \cite[Theorem 4.60]{aliprantis2}, \cite[Corollary 4.10]{aliprantis2}.
\end{proof}

Let us see that the converse of the proposition above is not true, that is, KB-spaces are not always PPS lattices. %The example also gives a PPS lattice that is not an AL-space.

\begin{example}\label{exexex}\rm By $T^*$ we denote Tsirelson's original space \cite{casazza, tsirelson}. We shall use the following features of $T^*$: it is reflexive and has an 1-inconditional basis \cite[p.\,16]{tsirelson}, continuous homogeneous polynomials on $T^*$ are weakly sequentially continuous \cite[p.\,121]{dineen}. The inconditional basis makes $T^*$ a Banach lattice. The reflexivity yields that $T^*$ contains no copy of $c_0$, hence it is a KB-space, % (ii) $T^*$ is not an AL-space \cite[Corollary 2, p.\,128]{schaefer1};
and that $T^*$ fails the positive Schur property (reflexive spaces contain no copy of $\ell_1$ and Banach lattices with the positive Schur property contain a copy of $\ell_1$ (see \cite[Corollary, p.\,19]{wnuk}). Combining the failure of the positive Schur property with the weak sequential continuity of polynomials we conclude, as we did in the proof of Proposition \ref{fpro}, that $T^*$ is not a PPS lattice.
\end{example}

%Even though we have just given a positive example, it seems that
Thus far we have provided more counterexamples than examples. Next section will fix this situation.

\section{$L_p(\mu)$-spaces}

The purpose of this section is to prove that all $L_p(\mu)$-spaces, $1 \leq p < \infty$, are PPS lattices. This gives, in particular, plenty of examples of PPS lattices failing the positive Schur property, because, as we have just seen in Example \ref{exexex}, infinite dimensional reflexive Banach lattices fail the positive Schur property. Though some of the preparatory results below might be known to experts in Measure Theory, we include short proofs because we found no reference to quote.

The next lemma was inspired in \cite[Theorem 13.25]{aliprantis}, the case of Banach spaces can be found in \cite[p.\,14]{johnson} or \cite[p.\,34]{fabian}.
\begin{lemma}\label{isomedfin} Let $1\leq p< \infty$ and a $\sigma$-finite measure space $(\Omega, \Sigma,\mu)$ be given. Then there exists a probability measure $\lambda$ on $(\Omega, \Sigma)$ such that $L_p(\mu)$ is lattice isometric to $L_p(\lambda)$.
\end{lemma}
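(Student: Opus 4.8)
The plan is to realize $\lambda$ as $\mu$ reweighted by a strictly positive density of total mass $1$, and then to obtain the lattice isometry as multiplication by a suitable power of that density. Since $(\Omega,\Sigma,\mu)$ is $\sigma$-finite, I would first fix a countable measurable partition $\Omega=\bigcup_{n=1}^\infty\Omega_n$ with $\mu(\Omega_n)<\infty$ for all $n$, and set
$$
f:=\sum_{n=1}^\infty\frac{1}{2^n\,(1+\mu(\Omega_n))}\,\chi_{\Omega_n},
$$
so that $0<f\le 1$ on $\Omega$ and $\int_\Omega f\,d\mu\le 1$. If $\int_\Omega f\,d\mu=0$ then $\mu$ is the zero measure and the statement is trivial; otherwise put $w:=f\big/\!\int_\Omega f\,d\mu$, a strictly positive measurable function with $\int_\Omega w\,d\mu=1$.

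Next I would define $\lambda(A):=\int_A w\,d\mu$ for $A\in\Sigma$; the Monotone Convergence Theorem yields countable additivity and $\lambda(\Omega)=1$, so $\lambda$ is a probability measure on $(\Omega,\Sigma)$ with $d\lambda/d\mu=w$, and since $w>0$ the measures $\mu$ and $\lambda$ are mutually absolutely continuous (they have the same null sets). I would then consider
$$
T\colon L_p(\mu)\longrightarrow L_p(\lambda),\qquad Tf:=w^{-1/p}\,f,
$$
which is well defined $\lambda$-a.e.\ because $w>0$. Using $\int_\Omega h\,d\lambda=\int_\Omega h\,w\,d\mu$ for measurable $h\ge 0$, one gets $\int_\Omega|Tf|^p\,d\lambda=\int_\Omega|f|^p w^{-1}w\,d\mu=\int_\Omega|f|^p\,d\mu$, so $T$ is a linear isometry into $L_p(\lambda)$; it is onto, since $w^{1/p}g\in L_p(\mu)$ for every $g\in L_p(\lambda)$ (same computation) and $T(w^{1/p}g)=g$. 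Finally, multiplication by the strictly positive function $w^{-1/p}$ preserves the order and commutes with $\vee$, $\wedge$ and $|\cdot|$ — e.g.\ $T(f\vee g)=w^{-1/p}(f\vee g)=(w^{-1/p}f)\vee(w^{-1/p}g)=Tf\vee Tg$ and $|Tf|=w^{-1/p}|f|=T|f|$ — so $T$ is the desired onto lattice isometry.

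There is no serious obstacle here: the argument is a reweighting followed by a change of density. The only points that call for a little care are the construction of a density $w$ that is simultaneously strictly positive and $\mu$-integrable with integral $1$ (which is exactly why one first decomposes $\Omega$ into finite-measure pieces), and the remark that the mutual absolute continuity of $\mu$ and $\lambda$ is precisely what makes $w^{-1/p}$, hence $T$, legitimate.
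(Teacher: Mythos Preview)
Your proof is correct and follows essentially the same approach as the paper's: both construct a strictly positive $\mu$-integrable density $w$ (resp.\ $g$) with unit integral, set $\lambda(A)=\int_A w\,d\mu$, and take $T(f)=w^{-1/p}f$ as the lattice isometry. You are simply more explicit about building the density from a $\sigma$-finite partition and about verifying the isometry and surjectivity, whereas the paper invokes the Banach-space case for those steps and concentrates only on checking that $T$ preserves $\vee$.
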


\begin{proof} Choose a measurable $\mu$-almost everywhere positive function $g$ such that $\int_\Omega gd\mu=1$ and consider the probability measure
$$\lambda\colon\Sigma\longrightarrow [0,1]~,~\lambda(A)=\int_Agd\mu.$$
It is easy to check that %It is clear that $\lambda$ is $\mu$-continuous, and, from %Pela definição de $\lambda$ segue que, se $\mu(A)=0$ então $\lambda(A)=0$, ou seja, $\lambda$ é absolutamente contínua com relação a $\mu$ ($\lambda\ll\mu$). Por outro lado temos
%$$
%\mu(A)=\int_Ad\mu=\int_A\frac{1}{g} gd\mu=\int_A\frac{1}{g}d\lambda
%$$
%for every $A \in \Sigma$, it follows that $\mu$ is $\lambda$-continuous. So,
the $\mu$-null sets and $\lambda$-null sets coincide, and from the case of Banach spaces it is known that the operator
$$T\colon L_p(\mu)\longrightarrow L_p(\lambda)~, ~T(f)=fg^{-1/p},$$
is an isometric isomorphism. %is well defined linear isometric embedding because %Esta aplicação está bem definida, pois para todo $f\in L_p(\mu)$, temos
%$$
%\int_\Omega|fg^{-1/p}|^pd\lambda=\int_\Omega|f|^pg^{-1}gd\mu=\int_\Omega|f|^pd\mu.
%$$
%To see that $T$ is surjective, %Portanto $T(f)\in L_p(\lambda)$.A linearidade de $T$ é facilmente verificada e segue pelo argumento em  $(\star)$ que $\|T(f)\|=\|f\|$, ou seja, $T$ é uma isometria.Temos ainda que $T$ é sobrejetiva. De fato, dado
%given $h\in L_p(\lambda)$ it is easy to check that  %basta escrevermos $h=hg^{1/p}g^{-1/p}$, assim
%$$
%\int_\Omega|hg^{1/p}|^pd\mu=\int_\Omega|h|^pgg^{-1}d\lambda=\int_\Omega|h|^pd\lambda<\infty.
%$$
%$hg^{1/p}\in L_p(\mu)$ and $T(fg^{1/p})=h$.
We just have to check that $T$ is a Riesz homomorphism: %Por fim, $T$ é um homomorfismo de Riesz. De fato, dados
given $f_1,f_2\in L_p(\mu)$, since $g^{-1/p}(x)\geq 0$ $\lambda$-almost everywhere,  %for para todo $x$ ($\lambda$-quase sempre)
\begin{align*}
    T(f_1\lor f_2)(x)
    &=((f_1\lor f_2)g^{-1/p})(x)=(f_1\lor f_2)(x)g^{-1/p}(x)=(f_1(x)\lor f_2(x))g^{-1/p}(x)\\
    &=(f_1(x)g^{-1/p}(x))\lor(f_2(x)g^{-1/p}(x)=(f_1g^{-1/p})(x)\lor(f_2g^{-1/p})(x)\\
    &=T(f_1)(x)\lor T(f_2)(x) = (T(f_1)\lor T(f_2))(x),
\end{align*}
holds $\lambda$-almost everywhere. %A igualdade em $\star\star$ segue das propriedades de supremo em ???, lembrando que $g^{-1/p}(x)\geq 0$ ($\lambda$-quase sempre). Portanto $T(f_1\lor f_2)=T(f_1)\lor T(f_2)$ e concluímos assim que $L_p(\mu)$ é isométrico  como reticulado de Banach a $L_p(\lambda)$.
\end{proof}

Let $(\Omega, \Sigma, \mu)$ be a measure space and $A\in\Sigma$ be a non-void measurable set. Then $(A,\Sigma|_A,\mu|_A)$ is a measure space, where  $\Sigma|_A:=\{B\subset A:B\in\Sigma\}$ and $\mu|_A$ is defined on $\Sigma|_A$ in the obvious way (see, e.g., \cite[p.\,2 and p.\,68]{bauer}). For every  $\mu$-integrable function $f$ on $\Omega$, denoting its restriction to $A$ by $f|_A$, it holds
\begin{equation}\label{inte}\int_A(f|_A)d(\mu|_A)=\int_Afd\mu
\end{equation}
 \cite[Lemma 12.5]{bauer}. For $1 \leq p < \infty$, this allows us to regard $L_p(\mu|_A)$ as a subspace of $L_p(\mu)$ via the correspondence %$g \in L_p(\mu|_A) \mapsto g\chi_A \in L_p(\mu)$, where $\chi_A$ is the characteristic function of $A$ on $\Omega$. \textcolor{red}{(Não seria:
 $g \in L_p(\mu|_A) \mapsto g_A \in L_p(\mu)$, where $g_A(x)=g(x)$ if $x\in A$ and $g_A(x)=0$ if $x\in(\Omega\setminus A$).  % Com essas observações podemos enunciar e demonstrar o seguinte resultado.

\begin{lemma}\label{projposcont} The correspondence $f \mapsto f|_A$  is a norm one positive projection from $L_p(\mu)$ onto $L_p(\mu|_A)$.%, para qualquer $A\in\Sigma$ e $1\leq p<\infty$.
\end{lemma}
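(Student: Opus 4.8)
The plan is to derive every assertion from identity \eqref{inte}. Write $R\colon L_p(\mu)\longrightarrow L_p(\mu|_A)$ for the restriction map $R(f)=f|_A$, and recall that, via the correspondence $g\mapsto g_A$ introduced just before the statement, $L_p(\mu|_A)$ is regarded as a closed subspace of $L_p(\mu)$. The first thing I would check is that $R$ is well defined on equivalence classes: if $N\in\Sigma$ has $\mu(N)=0$, then $N\cap A\in\Sigma|_A$ with $\mu|_A(N\cap A)=\mu(N\cap A)\le\mu(N)=0$, so $\mu$-almost everywhere equal functions have $\mu|_A$-almost everywhere equal restrictions. Applying \eqref{inte} to the $\mu$-integrable function $|f|^p$ gives $\|f|_A\|_{L_p(\mu|_A)}^p=\int_A|f|^p\,d\mu\le\int_\Omega|f|^p\,d\mu=\|f\|_{L_p(\mu)}^p$, which shows at once that $f|_A\in L_p(\mu|_A)$ and that $\|R\|\le 1$. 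Linearity of $R$ is immediate, and positivity is clear, since $f\ge 0$ $\mu$-a.e.\ forces $f|_A\ge 0$ $\mu|_A$-a.e.

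For surjectivity, given $g\in L_p(\mu|_A)$, the same identity applied to $|g_A|^p$ (which vanishes off $A$) yields $\int_\Omega|g_A|^p\,d\mu=\int_A|g|^p\,d(\mu|_A)<\infty$; hence $g_A\in L_p(\mu)$, the embedding $g\mapsto g_A$ is isometric onto its range, and $R(g_A)=g_A|_A=g$. Thus $R$ is onto, and for $g\ne 0$ we get $\|R\|\ge\|R(g_A)\|/\|g_A\|=1$; together with the bound of the previous paragraph this gives $\|R\|=1$ (using, as is implicit in the statement, that $A$ is not $\mu$-null).

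It remains to see that $R$ is a projection. Under the identification $g\leftrightarrow g_A$, the map $R$ is realized as the operator $P\colon L_p(\mu)\longrightarrow L_p(\mu)$ sending $f$ to the zero-extension $(f|_A)_A$ of its restriction. Since $(f|_A)_A|_A=f|_A$ for every $f\in L_p(\mu)$, we have $P^2=P$, and the range of $P$ is exactly $\{g_A:g\in L_p(\mu|_A)\}$, the copy of $L_p(\mu|_A)$ inside $L_p(\mu)$. Therefore $P$ is an idempotent, positive, norm-one operator onto $L_p(\mu|_A)$, which is the assertion. I do not anticipate any genuine obstacle: the argument is just bookkeeping built on \eqref{inte}. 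The only two points that deserve a word of care are making the identification explicit so that ``projection onto $L_p(\mu|_A)$'' is unambiguous, and recording the harmless measure-theoretic fact that $\mu$-null sets meet $A$ in $\mu|_A$-null sets, so that restriction descends to a well-defined map of $L_p$-classes.
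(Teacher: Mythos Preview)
Your proof is correct and follows the same approach as the paper: both derive the claimed properties from identity~\eqref{inte}. The paper's proof is actually shorter---it spells out only the positivity argument (essentially your observation that $\mu$-null sets meet $A$ in $\mu|_A$-null sets) and leaves the norm condition, surjectivity, and idempotence to the reader---so your version is, if anything, more complete.
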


\begin{proof} We just prove that the correspondence is positive (the norm conditon follows from (\ref{inte})). Given a positive function $f\in L_p(\mu)$, there exists a measurable set $B\in\Sigma$ such that $\mu(B)=0$ and $f(x)\geq 0$ for every $x\in \Omega \setminus B$. Then $B\cap A\in\Sigma|_A$,
$$0\leq \mu|_A(B\cap A)=\mu(B\cap A)\leq \mu(B)=0$$
and $f|_A(x)\geq 0$ for every $x\in \Omega\setminus (B\cap A)$. So, $f|_A$ is positive in  $L_p(\mu|_A)$.
\end{proof}

We shall actually prove that $L_p(\mu)$-spaces enjoy a property stronger than being a PPS lattice.

\begin{definition}\rm Given $n \in \mathbb{N}$, a Banach lattice $E$ has the {\it $n$-positive polynomial Schur property} if every weakly null sequence $(x_j)_{j=1}^\infty$ in $E$ formed by positive vectors such that $P(x_j) \longrightarrow 0$ for every regular $n$-homogeneous polynomial $P$ on $E$ is norm null. In this case we say that $E$ is an {\it $n$-PPS lattice}.
\end{definition}

The next lemma is actually a first step in the proof that arbitrary $L_p(\mu)$-spaces are $n$-PPS lattices for each $n \geq p$.

\begin{lemma}\label{Lpmedfin} Let $ 1\leq p<\infty$ and $n \in \mathbb{N}$.
If $L_p(\lambda$) is an $n$-PPS lattice for every probability measure $\lambda$, then $L_p(\mu)$ is an $n$-PPS lattice for any measure $\mu$.
\end{lemma}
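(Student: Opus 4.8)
The plan is to localize to a $\sigma$-finite portion of $\Omega$ and then apply the hypothesis through Lemma \ref{isomedfin}. Start with a positive weakly null sequence $(x_j)_{j=1}^\infty$ in $L_p(\mu)$ with $P(x_j)\to 0$ for every $P\in\mathcal{P}^r(^nL_p(\mu))$; the goal is $\|x_j\|\to 0$. Since every element of $L_p(\mu)$ vanishes off a $\sigma$-finite set, $A:=\bigcup_j\{x_j\neq 0\}$ is $\sigma$-finite, so $(A,\Sigma|_A,\mu|_A)$ is a $\sigma$-finite measure space, and each $x_j$ is supported in $A$, hence is identified with $(x_j|_A)_A$ under the isometric embedding $L_p(\mu|_A)\hookrightarrow L_p(\mu)$ described before Lemma \ref{projposcont}.

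Next I would transfer the data to $L_p(\mu|_A)$. By Lemma \ref{projposcont} the map $R\colon L_p(\mu)\to L_p(\mu|_A)$, $R(f)=f|_A$, is a positive norm-one (hence bounded, hence weak-to-weak continuous) linear operator with $R(x_j)=x_j|_A$, so $(x_j|_A)_{j=1}^\infty$ is a positive weakly null sequence in $L_p(\mu|_A)$. To see that $Q(x_j|_A)\to 0$ for every $Q\in\mathcal{P}^r(^nL_p(\mu|_A))$, write $Q=Q_1-Q_2$ with $Q_1,Q_2$ positive $n$-homogeneous; the symmetric $n$-linear form associated with $Q_i\circ R$ is $(u_1,\dots,u_n)\mapsto\check{Q}_i(Ru_1,\dots,Ru_n)$, which is positive because $R$ and $\check{Q}_i$ are positive, so $Q\circ R=Q_1\circ R-Q_2\circ R\in\mathcal{P}^r(^nL_p(\mu))$ and $Q(x_j|_A)=(Q\circ R)(x_j)\to 0$.

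Then I would invoke the hypothesis. As $\mu|_A$ is $\sigma$-finite, Lemma \ref{isomedfin} gives a probability measure $\lambda$ and a lattice isometry $T\colon L_p(\mu|_A)\to L_p(\lambda)$. The sequence $(T(x_j|_A))_{j=1}^\infty$ is positive ($T$ is a lattice homomorphism) and weakly null ($T$ is bounded linear), and for each $S\in\mathcal{P}^r(^nL_p(\lambda))$ the same composition argument yields $S\circ T\in\mathcal{P}^r(^nL_p(\mu|_A))$, so $S(T(x_j|_A))=(S\circ T)(x_j|_A)\to 0$. Since $L_p(\lambda)$ is an $n$-PPS lattice, $\|T(x_j|_A)\|\to 0$, whence $\|x_j|_A\|\to 0$ because $T^{-1}$ is bounded; finally $\|x_j\|_{L_p(\mu)}=\|x_j|_A\|_{L_p(\mu|_A)}\to 0$ by the isometry of the embedding and the fact that $x_j$ is supported in $A$.

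The main obstacle is the verification that composing a regular $n$-homogeneous polynomial with a positive linear operator again gives a regular $n$-homogeneous polynomial — handled above through the associated symmetric multilinear forms, using that positivity of a multilinear form is preserved under precomposition by positive operators — together with the standard but essential observation that $L_p$-functions have $\sigma$-finite support, which legitimizes the reduction to $A$. The remaining points (preservation of positivity and weak nullity under bounded and positive maps, and the transfer of the $n$-PPS property along the lattice isometry of Lemma \ref{isomedfin}) are routine.
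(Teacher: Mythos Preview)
Your proof is correct and follows essentially the same route as the paper's: localize to a $\sigma$-finite support set $A$, push the sequence down via the positive projection of Lemma~\ref{projposcont}, and then use Lemma~\ref{isomedfin} together with the hypothesis on probability measures. The only cosmetic differences are that the paper constructs $A$ explicitly as $\bigcup_{j,k}\{|f_j|>1/k\}$ rather than $\bigcup_j\{x_j\neq 0\}$, and it absorbs your last transfer step (through the lattice isometry $T$) into a single appeal to ``the analogue of Proposition~\ref{propsub} for $n$-PPS lattices''; your version simply unpacks that appeal and is, if anything, more careful in recording that weak nullity is preserved and that $Q\circ R$ is regular.
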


\begin{proof} Let $(\Omega, \Sigma, \mu)$ be a measure space and $(f_j)_{j=1}^\infty$ be a positive weakly null sequence in $L_p(\mu)$ such that $P(f_j)\longrightarrow 0$ for every $P\in\mathcal{P}^r(^nL_p(\mu))$. For $j,k\in\mathbb{N}$ consider the measurable set $A_{j,k}=\{x\in\Omega:|f_j(x)|>1/k\}$. Since $f_j\in L_p(\mu)$,   $A_{j,k}$ has finite measure for all $j,k\in\mathbb{N}$. Thus,  $A:=\bigcup\limits_{j,k=1}^\infty A_{j,k}$ is a $\sigma$-finite set and $f_j(x)=0$ for all $x\in\Omega\setminus A$ and $j\in\mathbb{N}$.

 Considering the $\sigma$-finite measure space $(A,\Sigma|_A,\mu|_A)$, Lemma \ref{isomedfin} gives that $L_p(\mu|_A)$ is lattice isometric to $L_p(\lambda)$ for some probability measure $\lambda$, which is an $n$-PPS lattice by assumption. So $L_p(\mu|_A)$ is an $n$-PPS lattice by the analogue of Proposition \ref{propsub} to $n$-PPS lattices. Call $T \colon L_p(\mu)\longrightarrow L_p(\mu|_A)$ the norm one positive projection provided by Lemma \ref{projposcont}. In this fashion, $(T(f_j))_{j=1}^\infty$ is a positive sequence in $ L_p(\mu|_A)$ and, for any $Q\in\mathcal{P}^r(^nL_p(\mu|_A))$ we have $(Q\circ T)\in\mathcal{P}^r(^nL_p(\mu))$, so  $Q(T(f_j))\longrightarrow 0$. The $n$-PPS property of $L_p(\mu|_A)$ implies   $\|f_j\|_p = \|T(f_j)\|_p\longrightarrow 0$, proving that %e consequentemente $\|f_j\|\longrightarrow 0$ (pois $\|T(f_j)\|=\|f_j\|$). Com isso concluímos que
 $L_p(\mu)$ is an $n$-PPS lattice.
\end{proof}

The last ingredient we need can be found, essentially, in the middle of the proof of \cite[Theorem 4.55]{fabian}.

\begin{lemma}\label{subseqconjdisj} Let $(\Omega,\Sigma, \mu)$ be a probability space, $1\leq p<\infty$ and let $(f_j)_{j=1}^\infty$ be a sequence in $L_p(\mu)$  that converges to zero in measure. Then there exists a subsequence $(f_{j_k})_{k=1}^\infty$ and a sequence $(A_k)_{k=1}^\infty$ of pairwise disjoint measurable sets such that $\|f_{j_k}-f_{j_k}\chi_{A_k}\|_p\longrightarrow 0$.
\end{lemma}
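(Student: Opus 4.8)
The plan is to produce the subsequence and the disjoint sets by a single induction that records, at step $k$, a truncation level $M_k$, an index $j_k$, and an absolute-continuity modulus $\eta_k$ attached to the function just chosen. The underlying idea is the classical splitting of each selected $f_{j_k}$ into its large part $f_{j_k}\chi_{E_k}$, where $E_k=\{x\in\Omega:|f_{j_k}(x)|>M_k\}$, and its small part $f_{j_k}\chi_{\Omega\setminus E_k}$. Since $\mu(\Omega)=1$ and $|f_{j_k}|\le M_k$ off $E_k$, one has $\|f_{j_k}\chi_{\Omega\setminus E_k}\|_p\le M_k$, so choosing $M_k\to 0$ disposes of the small parts and the whole issue becomes making the large parts almost disjoint.

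Concretely I would take $M_k=1/k$. Assume $j_1<\cdots<j_{k-1}$, sets $E_1,\dots,E_{k-1}$ and numbers $\eta_1,\dots,\eta_{k-1}>0$ have been chosen, and set $\widetilde\eta_{k-1}=\min\{1,\eta_1,\dots,\eta_{k-1}\}$. Because $(f_j)$ converges to zero in measure, $\mu(\{|f_j|>M_k\})\to 0$, so there is $j_k>j_{k-1}$ with $\mu(E_k)<\widetilde\eta_{k-1}\,2^{-k}$, where $E_k=\{|f_{j_k}|>M_k\}$. As $|f_{j_k}|^p\in L_1(\mu)$, the finite measure $B\mapsto\int_B|f_{j_k}|^p\,d\mu$ is absolutely continuous with respect to $\mu$ (recall $\int_{\{|f_{j_k}|>n\}}|f_{j_k}|^p\,d\mu\to 0$ as $n\to\infty$), so we can pick $\eta_k>0$ with $\int_B|f_{j_k}|^p\,d\mu<2^{-kp}$ whenever $\mu(B)<\eta_k$. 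This closes the recursion, and I would then set $A_k:=E_k\setminus\bigcup_{l>k}E_l$.

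The $A_k$ are pairwise disjoint: for $k_1<k_2$, $A_{k_1}$ is disjoint from $\bigcup_{l>k_1}E_l\supseteq E_{k_2}\supseteq A_{k_2}$. For the estimate, $f_{j_k}-f_{j_k}\chi_{A_k}=f_{j_k}\chi_{\Omega\setminus A_k}$ with $\Omega\setminus A_k\subseteq(\Omega\setminus E_k)\cup\bigcup_{l>k}E_l$, so
\[
\bigl\|f_{j_k}-f_{j_k}\chi_{A_k}\bigr\|_p\le\bigl\|f_{j_k}\chi_{\Omega\setminus E_k}\bigr\|_p+\Bigl\|f_{j_k}\chi_{\bigcup_{l>k}E_l}\Bigr\|_p\le M_k+\Bigl\|f_{j_k}\chi_{\bigcup_{l>k}E_l}\Bigr\|_p .
\]
Now $\mu\bigl(\bigcup_{l>k}E_l\bigr)\le\sum_{l>k}\widetilde\eta_{l-1}2^{-l}\le\widetilde\eta_k\sum_{l>k}2^{-l}=\widetilde\eta_k2^{-k}<\eta_k$, using that $\widetilde\eta_{l-1}\le\widetilde\eta_k$ for $l>k$; hence by the choice of $\eta_k$ the last term is $<2^{-k}$, and $\|f_{j_k}-f_{j_k}\chi_{A_k}\|_p<\tfrac1k+2^{-k}\longrightarrow 0$, which is the conclusion.

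The one delicate point --- the main obstacle --- is the quantifier order in this bookkeeping: $\eta_k$ is available only \emph{after} $f_{j_k}$ has been chosen, yet the estimate needs the \emph{future} union $\bigcup_{l>k}E_l$ to have measure below $\eta_k$. Controlling $\mu(E_l)$ by $\widetilde\eta_{l-1}2^{-l}$, with $\widetilde\eta_k$ the running minimum of the moduli, makes the tails $\sum_{l>k}\mu(E_l)$ lie below $\widetilde\eta_k\le\eta_k$ for every $k$ simultaneously. Everything else is routine: convergence in measure to select the $j_k$, and the $\varepsilon$--$\delta$ form of absolute continuity of the integral of a single $L_p$ function.
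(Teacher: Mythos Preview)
Your proof is correct and follows essentially the same route as the paper's: define truncation sets $E_k=\{|f_{j_k}|>M_k\}$ with $M_k\to 0$, use convergence in measure together with absolute continuity of $\int|f_{j_k}|^p\,d\mu$ to make the $E_k$ small enough, and disjointify via $A_k=E_k\setminus\bigcup_{l>k}E_l$. The only cosmetic difference is the bookkeeping of the moduli---the paper chooses each $\delta_k$ to work simultaneously for all previously selected functions and then sums the bounds $\int_{F_j}|f_{n_k}|^p<2^{-(j-1)}$ over $j>k$, whereas you attach $\eta_k$ to a single function and use the running minimum $\widetilde\eta_k$ to force $\mu\bigl(\bigcup_{l>k}E_l\bigr)<\eta_k$ in one stroke; both devices resolve the same quantifier-order issue you correctly flagged.
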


\begin{proof} %Relembre que, a sequência $(f_n)_n$ convergir para zero em medida significa que, para todo $\varepsilon>0$ temos $\mu\{x\in\Omega; |f_n(x)|\geq \varepsilon\}\longrightarrow 0$. Assim,
%\begin{itemize}
    Put $n_1 = 1$, that is, $f_{n_1}=f_1$, and consider the set $F_1=\{x\in\Omega : |f_{n_1}(x)|^p>1/2\}$. Since $f_{n_1}\in L_p(\mu)$, there exists $\delta_1>0$ such that $\int_E|f_{n_1}|^pd\mu<1/2$ whenever $\mu(E)<\delta_1$. The convergence in measure to zero provides an integer $n_2>n_1$ such that $\mu\{x\in\Omega : |f_{n_2}(x)|^p>1/2^2\}<\delta_1$. Define $F_2=\{x\in\Omega : |f_{n_2}(x)|^p>1/2^2\}$ and note that, as $f_{n_1},~f_{n_2}\in L_p(\mu)$, there exists $\delta_2>0$ such that $\int_E|f_{n_i}|^pd\mu<1/2^2$, $i=1,2$, whenever $\mu(E)<\delta_2$. Using again the convergence in measure to zero, there exists $n_3>n_2$ such that $\mu\{x\in\Omega : |f_{n_3}(x)|^p>1/2^3\}<\delta_2$. Define $F_3=\{x\in\Omega: |f_{n_3}(x)|^p>1/2^3\}$ and note that, as $f_{n_1},~f_{n_2},~f_{n_3}\in L_p(\mu)$, there exists $\delta_3>0$ such that $\int_E|f_{n_i}|^pd\mu<1/2^3$, $i=1,2,3$, whenever $\mu(E)<\delta_3$. Proceeding in this way we construct a subsequence $(f_{n_k})_{k=1}^\infty$ and a sequence $\{F_k\}_{k=1}^\infty$ of measurable sets such that
\begin{align*}
\|f_{n_k}-f_{n_k}\chi_{F_k}\|_p&=\left(\int_{\Omega\setminus F_k}|f_{n_k}|^p d\mu\right)^{1/p}<\left(\int_{\Omega\setminus F_k}1/2^kd\mu\right)^{1/p}\leq(1/2^k)^{1/p}
\end{align*}
for every $k\in\mathbb{N}$. Now we define
 $$\textstyle A_1=F_1\setminus\bigcup\limits_{k>1}F_k, A_2=F_2\setminus\bigcup\limits_{k>2}F_k, \cdots, A_j=F_j\setminus\bigcup\limits_{k>j}F_k, \ldots$$ It is clear that these are pairwise disjoint measurable sets. %Então a coleção $\{A_k\}_k$ é formada por conjuntos mensuráveis disjuntos dois a dois.
For a fixed $k\in\mathbb{N}$, consider the measure $\lambda(A)=\int_A|f_{n_k}|^pd\mu$, $A \in \Sigma$. Taking into account the definition of the sets $F_k$, $k \in \mathbb{N}$, more precisely the fact that $\int_{F_j}|f_{n_k}|^p d\mu< \frac{1}{2^{j-1}}$ for $j > k$ , we have% have been constructed yiels the following:We have
\begin{align*}
    \|f_{n_k}\chi_{F_k}-f_{n_k}\chi_{A_k}\|_p^p
    &=\int_\Omega|f_{n_k}(\chi_{F_k}-\chi_{A_k})|^pd\mu=\int_\Omega |f_{n_k}\chi_{(F_k\cap(\bigcup_{j>k}F_j))}|^pd\mu\\
    &=\lambda\Big(F_k\cap\Big({\textstyle\bigcup\limits_{j>k}}F_j\Big)\Big)\leq\lambda
    \Big({\textstyle\bigcup\limits_{j>k}}F_j\Big)\leq\sum_{j>k}\lambda(F_j)\\
    &=\sum_{j>k}\int_{F_j}|f_{n_k}|^pd\mu <\sum_{j>k}
    \frac{1}{2^{j-1}}=\frac{1}{2^{k-1}}.
    \end{align*}
%onde a desigualdade $\star$ segue pela forma como a coleção $\{F_k\}_k$ foi construída. De fato, fixado $k\in\mathbb{N}$, para todo $j>k$ o processo de indução nos garante que $\mu(F_j)<\delta_{j-1}$ e portanto $\int_{F_j}|f_{n_k}|^pd\mu\leq\frac{1}{2^{j-1}}$.
it follows that
%Com isso concluímos que $\|f_{n_k}\chi_{F_k}-f_{n_k}\chi_{A_k}\|\leq \left(\frac{1}{2^{k-1}}\right)^{1/p}$ para todo $k\in\mathbb{N}$. E utilizando desigualdade triangular obtemos que,
$$
\|f_{n_k}-f_{n_k}\chi_{A_k}\|_p\leq\|f_{n_k}-f_{n_k}\chi_{F_k}\|_p+\|f_{n_k}\chi_{F_k}-f_{n_k}\chi_{A_k}\|_p <
\frac{1}{2^{k/p}}+\frac{1}{2^{(k-1)/p}},
$$
from which we get the desired convergence $\|f_{n_k}-f_{n_k}\chi_{A_k}\|_p\longrightarrow 0$.
\end{proof}

\begin{theorem}\label{Lp} For all measures $\mu$ and $1 \leq p  <\infty$, $L_p(\mu)$ is an $n$-PPS lattice for every $n \geq p$.
\end{theorem}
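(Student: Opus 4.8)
The plan is to reduce to probability measures, exploit that positive weakly null sequences in $L_p$ over a finite measure converge to zero in measure for free, disjointify via Lemma~\ref{subseqconjdisj}, and then manufacture an explicit regular $n$-homogeneous polynomial that recovers the $L_p$-norm on disjointly supported positive vectors.

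First, by Lemma~\ref{Lpmedfin} it suffices to treat a probability measure $\lambda$. So let $(f_j)_{j=1}^\infty$ be a positive weakly null sequence in $L_p(\lambda)$ with $P(f_j)\to 0$ for every $P\in\mathcal P^r(^nL_p(\lambda))$, where $n\ge p$; I want $\|f_j\|_p\to 0$. The key elementary remark --- using neither the polynomials nor $n\ge p$ --- is that $(f_j)$ converges to $0$ in measure: since $\lambda$ is finite, $\chi_\Omega\in L_{p'}(\lambda)=L_p(\lambda)^*$, so $\|f_j\|_1=\int_\Omega f_j\,d\lambda=\langle f_j,\chi_\Omega\rangle\to 0$, and $L_1$-convergence to zero forces convergence in measure by Chebyshev's inequality (in particular $p=1$ is already done). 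To prove $\|f_j\|_p\to 0$ I will show that every subsequence has a further subsequence that is norm null, which suffices since the hypotheses pass to subsequences. Fix a subsequence; it still converges to $0$ in measure, so Lemma~\ref{subseqconjdisj} furnishes a subsequence $(f_{j_k})_{k=1}^\infty$ and pairwise disjoint $A_k\in\Sigma$ with $\|f_{j_k}-g_k\|_p\to 0$, where $g_k:=f_{j_k}\chi_{A_k}\ge 0$ and $\operatorname{supp}g_k\subseteq A_k$. Since $(f_{j_k})$ is bounded and polynomials are uniformly continuous on bounded sets, $P(g_k)\to 0$ for every $P\in\mathcal P^r(^nL_p(\lambda))$, so it is enough to prove $\|g_k\|_p\to 0$.

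Now assume $p>1$ and set $\varphi_k:=\|g_k\|_p^{1-p}\,g_k^{p-1}$ when $g_k\neq 0$ and $\varphi_k:=0$ otherwise. A routine Hölder computation (using $(p-1)p'=p$) gives $\varphi_k\in L_{p'}(\lambda)$, $\varphi_k\ge 0$, $\|\varphi_k\|_{p'}\le 1$, $\operatorname{supp}\varphi_k\subseteq A_k$ and $\varphi_k(g_k)=\|g_k\|_p$, whence $\varphi_k(g_m)=\delta_{km}\|g_k\|_p$ by disjointness of the $A_k$. I then consider $P(h):=\sum_{k=1}^\infty\varphi_k(h)^n$, whose associated symmetric $n$-linear form is $\sum_k\varphi_k(h_1)\cdots\varphi_k(h_n)$. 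This form is bounded: by Hölder's inequality with $n$ exponents each equal to $n$, the disjointness of the $A_k$, and the inclusion $\ell_p\subseteq\ell_n$ (valid precisely because $n\ge p$),
$$\Big|\sum_k\varphi_k(h_1)\cdots\varphi_k(h_n)\Big|\le\sum_k\prod_{i=1}^n|\varphi_k(h_i)|\le\prod_{i=1}^n\Big(\sum_k\|h_i\chi_{A_k}\|_p^n\Big)^{1/n}\le\prod_{i=1}^n\Big(\sum_k\|h_i\chi_{A_k}\|_p^p\Big)^{1/p}\le\prod_{i=1}^n\|h_i\|_p.$$
As this form is visibly positive, $P$ is a positive, hence regular, $n$-homogeneous polynomial on $L_p(\lambda)$. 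Finally $P(g_k)=\sum_m\varphi_m(g_k)^n=\|g_k\|_p^n$, so $\|g_k\|_p^n=P(g_k)\to 0$, giving $\|g_k\|_p\to 0$ and then $\|f_{j_k}\|_p\le\|f_{j_k}-g_k\|_p+\|g_k\|_p\to 0$, which is what was needed.

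The delicate point is this last construction: one must verify that $P$ is a genuine (continuous) regular polynomial, i.e. that its symmetric $n$-linear form is bounded, and this is exactly where the hypothesis $n\ge p$ enters, through $\ell_p\subseteq\ell_n$ (equivalently, Hölder pointing in the right direction). Everything else --- the reduction to probability measures, the automatic convergence in measure, the disjointification, and the passage from $f_{j_k}$ to $g_k$ --- should be routine given the preparatory lemmas; one should also double-check the degenerate interpretations in the case $p=1$, but there the conclusion came directly from $\|f_j\|_1\to 0$.
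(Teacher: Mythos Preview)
Your proof is correct and takes essentially the same approach as the paper: reduce to a probability measure, obtain convergence in measure via $L_1$, disjointify with Lemma~\ref{subseqconjdisj}, and build a positive $n$-homogeneous polynomial from the $L_{p'}$-duals of the disjoint pieces that recovers the $L_p$-norm, with boundedness hinging precisely on $n\ge p$. The only cosmetic differences are your normalization of the functionals $\varphi_k$ (the paper uses $w_k^{\,p-1}$ unnormalized and gets $P(w_k)=\|w_k\|_p^{pn}$) and your passage from $f_{j_k}$ to $g_k$ via uniform continuity of polynomials on bounded sets, whereas the paper instead uses monotonicity of positive polynomials on the positive cone ($0\le w_k\le f_{j_k}\Rightarrow P(w_k)\le P(f_{j_k})$).
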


\begin{proof} By Proposition \ref{Lpmedfin} we can restrict ourselves to a probability measure $\mu$. Let $n \geq p$ be given and let $(f_m)_{m=1}^\infty$ be a positive weakly null sequence in $L_p(\mu)$ such that $P(f_m)\longrightarrow 0$ for every $P\in\mathcal{P}^r(^nL_p(\mu))$.
Consider an arbitrary subsequence $(f_j)_{j=1}^\infty$ of $(f_m)_{m=1}^\infty$. Since $(f_j)_{j=1}^\infty$ is weakly null in $L_p(\mu)$, it is bounded, and we can suppose that $\|f_j\|_p\leq 1$ for every $j\in\mathbb{N}$.
As $\mu$ is a probability measure, $L_p(\mu)\hookrightarrow L_1(\mu)$ is a norm one inclusion, so $\|f_j\|_1\leq 1$ for every $j\in\mathbb{N}$ and  $f_j\stackrel{w}{\longrightarrow} 0$ in $L_1(\mu)$ (bounded linear operators are weak-to-weak continuous). It follows that $\|f_j\|_1\longrightarrow 0$ because $L_1(\mu)$ has the positive Schur property.
%Seja $Q\in\mathcal{P}^r(^nL_1(\mu))$. Como a ordem em $L_1(\mu)$ e $L_p(\mu)$ é dada pontualmente ($\mu$-quase sempre), então a inclusão $L_p(\mu)\subset L_1(\mu)$ é positiva. Denotando esta inclusão por $i$ temos então que $Q\circ i\in\mathcal{P}^r(^nL_p(\mu))$, ou seja, a restrição de $Q$ a $L_p(\mu)$ é um polinômio $n$-homogêneo regular, assim $Q(f_j)\longrightarrow 0$. Como $L_1(\mu)$ é um $P\Lambda$-reticulado, segue que $\|f_j\|_1\longrightarrow 0$. (ERRADO. ESTAMOS ASSUMINDO n FIXO.)
Convergence in $L_1(\mu)$ implies convergence in measure, so Lemma \ref{subseqconjdisj} provides a subsequence $(f_{j_k})_{k=1}^\infty$ of $(f_j)_{j=1}^\infty$ and pairwise disjoint measurable sets $(A_k)_{k=1}^\infty$  so that
\begin{equation}\label{nneeww}\|f_{j_k}-f_{j_k}\chi_{A_k}\|_p\longrightarrow 0.
\end{equation}
Call $w_k:=f_{j_k}\chi_{A_k}$, that is, $w_k(x) = f_{j_k}(x)$ if $x\in A_k$ and  $w_k(x) =0$ if $x\in\Omega\setminus A_k$.
%$$
%w_k(x):=f_{j_k}\chi_{A_k}(x)=\systeme*{
%f_{j_k}(x); ~\textrm{se}~ x\in A_k,
%0; ~\textrm{se}~ x\in\Omega\setminus A_k}
%$$
%Fixando $n\in\mathbb{N}$, com $n\geq p$, definamos a aplicação
Let us see that
\begin{align*}
    T\colon L_p(\mu)^n\longrightarrow \mathbb{R}~,~T(g_1, \ldots, g_n ) =\sum_{k\in\mathbb{N}}\left(\prod_{i=1}^n\int_\Omega g_i(x)(w_k(x))^{p-1}d\mu(x)\right),
\end{align*}
is a well defined, positive, symmetric $n$-linear form. Denote by $q$ the conjugate of $p$ and, given $g_1,\ldots,g_n\in L_p(\mu)$, define $g=|g_1|\lor\cdots\lor |g_n|\in L_p(\mu)$ and $h=\frac{1}{\|g\|_p}g\in B_{L_p(\mu)}$. Take into account the following facts: (i)  $((w_k)^{p-1})_{k=1}^\infty\subseteq B_{L_q(\mu)}$ because $(w_k)_{k=1}^\infty\subseteq B_{L_p(\mu)}$, (ii) H\"older's inequality, (iii) $\|g_i\chi_{A_k}\|_p\leq\|g\|_p\cdot\|h\chi_{A_k}\|_p$ para all $k\in\mathbb{N}$ and $1\leq i\leq n$, which follows from
$$|g_i\chi_{A_k}|\leq g\chi_{A_k}= \|g\|_ph\chi_{A_k},$$
(iv) $\int_{A_k}(h(x))^pd\mu\leq 1$ for every $k\in\mathbb{N}$ and $n\geq p$, (v) the sets $(A_k)_{k=1}^\infty$ are pairwise disjoint and $h\in B_{L_p(\mu)}$. Therefore,
%Veremos agora que esta aplicação está bem definida, é $n$-linear,  positiva e simétrica.
%$\bullet$ Seja $(g_1,\cdots,g_n)\in L_p(\mu)\times\stackrel{n}{\cdots}\times L_p(\mu)$. Temos,
\begin{align*}
    \sum_{k\in\mathbb{N}}\left|\prod_{i=1}^n\int_\Omega g_i(x)(w_k(x))^{p-1}d\mu(x)\right|
    &\leq \sum_{k\in\mathbb{N}}\left(\prod_{i=1}^n\int_\Omega\big|g_i(x)(w_k(x))^{p-1}\big|d\mu(x)\right)\\
    &\leq \sum_{k\in\mathbb{N}}\left(\prod_{i=1}^n\|g_i\chi_{A_k}\|_p\cdot\|(w_k)^{p-1}\|_q\right)\\
    &\leq\sum_{k\in\mathbb{N}}\left(\prod_{i=1}^n\|g_i\chi_{A_k}\|_p\right)\leq \sum_{k\in\mathbb{N}}\left(\prod_{i=1}^n\|g\|_p\cdot\|h\chi_{A_k}\|_p\right)\\
&=\|g\|_p^n\cdot \sum_{k\in\mathbb{N}}\left(\int_{A_k}(h(x))^pd\mu\right)^{n/p}\\
&\leq\|g\|_p^n\cdot \sum_{k\in\mathbb{N}}\left(\int_{A_k}(h(x))^pd\mu\right)
\leq\|g\|_p^n.
    \end{align*}
This proves that $T$ is well defined. The $n$-linearity follows from the fact that,  given functions $g_1, \ldots, g_n, f \in L_p(\mu)$ and a scalar $\lambda$, both of the series
  $$\sum_{k\in\mathbb{N}}\left(\prod_{i=1}^n\left(\int_\Omega g_i(x)(w_k(x))^{p-1}d\mu(x)\right)\right)$$ and
  $$\sum_{k\in\mathbb{N}}\left(\left(\int_\Omega f(x)(w_k(x))^{p-1}d\mu(x)\right)\cdot\prod_{i=2}^n\left(\int_\Omega g_i(x)(w_k(x))^{p-1}d\mu(x)\right)\right)  $$
are convergent. We omit the details and the proofs that $T$ is positive and symmetric. %Com isso concluímos que a série que define a aplicação $T$ converge absolutamente e portanto $T$ está bem definida.
Thus, the $n$-homogeneous polynomial associated to $T$,
\begin{align*}
   P(g) = \sum_{k\in\mathbb{N}}\left(\int_\Omega g(x)(w_k(x))^{p-1}d\mu\right)^n {\rm ~for~every~} g \in L_p(\mu),
\end{align*}
is positive, hence monotone in the positive cone of $ L_p(\mu)$ \cite[Proposition 5]{loane1}. As $0\leq w_k\leq f_{j_k}$, we have $0\leq P(w_k)\leq P(f_{j_k})$ for every $k\in\mathbb{N}$. By the assumption on the sequence $(f_m)_{m=1}^\infty$, $P(f_{j_k})\longrightarrow 0$, so
\begin{align*}
 \|w_{k}\|_p^{pn}= \left(\int_\Omega w_{k}(x)(w_{k}(x))^{p-1}d\mu(x)\right)^n   =\sum_{i\in\mathbb{N}}\left(\int_\Omega w_{k}(x)(w_i(x))^{p-1}d\mu\right)^n= P(w_{k}) \longrightarrow 0.
\end{align*}
Combining $\|w_k\|_p \longrightarrow 0$ with (\ref{nneeww}) we get
$$\|f_{j_k}\|_p \leq \|f_{j_k}- w_k\|_p + \|w_k\|_p \longrightarrow 0. $$
This proves that every subsequence of $(f_m)_{m=1}^\infty$ admits a norm null subsequence, which is enough for us to conclude that $(f_m)_{m=1}^\infty$ is norm null.%that  $\|f_{j_k}-w_k\|_p= \|w_k\|_p \longrightarrow 0$. Com isso, a desigualdade $\|f_{j_k}\|_p-\|w_k\|_p\leq \|f_{j_k}-w_k\|_p$ juntamente com o fato de $\|w_k\|_p\longrightarrow 0$ nos garante que $\|f_{j_k}\|_p\longrightarrow 0$. Por fim, utilizando o Lema \ref{subsubseq} concluímos que $\|f_m\|_p\longrightarrow 0$ e portanto $L_p(\mu)$ é um $P\Lambda$-reticulado.
\end{proof}

Recall that every Hilbert lattice is lattice isometric to some $L_2(\mu)$ \cite[Corollary 2.7.5]{meyer}.

\begin{corollary} Hilbert lattices and $\ell_p$, $1 \leq p < \infty$, are PPS lattices.
\end{corollary}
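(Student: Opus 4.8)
The plan is to read the corollary off from Theorem \ref{Lp} once we notice that, for a fixed $n\in\mathbb{N}$, an $n$-PPS lattice is automatically a PPS lattice. To see this, let $E$ be an $n$-PPS lattice and let $(x_j)_{j=1}^\infty$ be a positive sequence in $E$ with $P(x_j)\longrightarrow 0$ for every regular homogeneous polynomial $P$ on $E$. Applying this hypothesis to the degree-one polynomials and using $E^*=\mathcal{P}^r(^1E)$ (as recorded in Example \ref{L1Plam}), the sequence $(x_j)_{j=1}^\infty$ is weakly null; moreover it satisfies $P(x_j)\longrightarrow 0$ for every regular $n$-homogeneous polynomial $P$. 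The $n$-PPS property of $E$ then forces $\|x_j\|\longrightarrow 0$, so $E$ is a PPS lattice.

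With this in hand, I would first treat $L_p(\mu)$ for an arbitrary measure $\mu$ and $1\leq p<\infty$: pick any integer $n\geq p$ (for instance $n=\lceil p\rceil$); by Theorem \ref{Lp} the space $L_p(\mu)$ is an $n$-PPS lattice, hence a PPS lattice by the observation above. Specializing $\mu$ to the counting measure on $\mathbb{N}$ yields that $\ell_p$ is a PPS lattice for every $1\leq p<\infty$.

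For the Hilbert lattice case, I would use the quoted fact that every Hilbert lattice $E$ is lattice isometric to some $L_2(\mu)$. Since $L_2(\mu)$ is a PPS lattice by the previous step (taking $p=n=2$), Proposition \ref{propsub}(b) transfers the property along the lattice isometry, so $E$ is a PPS lattice, completing the proof. There is no real obstacle here; the only point deserving care is the remark that being $n$-PPS for a single value of $n$ already delivers the full PPS property, and this is exactly where the identification $E^*=\mathcal{P}^r(^1E)$ is used to upgrade the polynomial hypothesis to weak nullity.
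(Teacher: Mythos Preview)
Your argument is correct and is exactly the route the paper intends: apply Theorem~\ref{Lp} to $L_p(\mu)$ (specializing to counting measure for $\ell_p$) and use the lattice isometry of Hilbert lattices with $L_2(\mu)$ together with Proposition~\ref{propsub}(b). The paper states the corollary without proof; your only addition is to make explicit the easy implication ``$n$-PPS for some fixed $n$ $\Rightarrow$ PPS'' via $E^*=\mathcal{P}^r(^1E)$, which the paper takes for granted.
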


It is worth mentioning that, although we were inspired and deeply influenced by \cite{carne} and \cite{jaramillo}, apart from the obvious specificities of the lattice setting, the strategy we used in this section differs from theirs in two points. First, it was proved in \cite{carne} that $L_p(\mu)$-spaces, $2\leq p < \infty$, are polynomially Schur, and the case $1 < p < 2$ was settled only when Jaramillo and Prieto \cite{jaramillo} established that superreflexive Banach spaces are polynomially Schur. Here we settled the range $1 \leq p < \infty$ at once without using superreflexivity. Second, the authors of \cite{carne} proved first that $\ell_p$, $1 \leq p < \infty$, is polynomially Schur, then used this fact to conclude that Hilbert spaces, in particular, $L_2(\mu)$-spaces, are polynomially Schur, and finally they conclude that  $L_p(\mu)$-spaces, $2\leq p < \infty$, are polynomially Schur. We did not need to go through these intermediary steps, and the fact that Hilbert lattices and $\ell_p$-spaces are PPS lattices is a consequence of our result. %This rather direct approach of ours was possible due to the fact that $L_1(\mu)$-spaces have the positive Schur property. %We do not know if arbitrary superreflexive Banach lattices are PPS lattices.

We finish this section showing that the positive polynomial Schur property is not preserved by the formation of positive Fremlin projective tensor products. By $E_1\widehat{\otimes}_{|\pi|}\cdots \widehat{\otimes}_{|\pi|} E_n$ we denote the positive Fremlin projective tensor product of the Banach lattices $E_1, \ldots, E_n$ \cite{fremlin1}.

\begin{example}\rm  It was proved by Fremlin in \cite[Example 4C]{fremlin1} that the Banach lattice $L_2[0,1]\widehat{\otimes}_{|\pi|}L_2[0,1]$ is not Dedekind complete. Thus, $L_2[0,1]$ is a PPS lattice (Theorem \ref{Lp}) whereas $L_2[0,1]\widehat{\otimes}_{|\pi|}L_2[0,1]$ is not (Proposition \ref{proorp}).
\end{example}

\section{The weak Dunford-Pettis property}

The main result we prove in this section has a twofold motivation. On the one hand, it is the lattice counterpart of the following equivalence: a Banach space has the Schur property if and only if it has the Dunford-Pettis property and is polynomially Schur \cite[Theorem 7.5]{carne}. On the other hand, in \cite[p.\,3]{chen} the following question is attributed to Wnuk: does every KB-space with the weak Dunford-Pettis property have the positive Schur property? Of course Wnuk was speculating about a property that should be added to the weak Dunford-Pettis property to guarantee the positive Schur property. It is shown in this section that the positive polynomial Schur property does the job. Furthermore, we prove a result on the existence of weakly null non-norm sequences in the positive projective tensor product formed by positive elementary tensors and establish that a Lorentz space and its predual fail the weak Dunford-Pettis property.

Recall that a Banach lattice $E$ has the {\it weak Dunford-Pettis property} (w-DPP) if every weakly compact operator from $E$ into any Banach space maps weakly null sequences in $E$ formed by pairwise disjoint vectors to norm null sequences \cite{leung}.

 The next two results are lattices versions of \cite[Lemma 7.4]{carne} and \cite[Proposition 2.34]{dineen}. The first one is motivated by the fact that the tensor product of weakly null sequences is not necessarily weakly null. We give an example in the lattice setting showing that even the tensor product of polynomially null sequences may fail to be weakly null. The example is based on \cite[Theorem 5.5]{castillo2}.

 \begin{example}\label{exlorentz}\rm By $d(w;1)$ we denote the Lorentz sequence space of \cite[Theorem 5.4]{castillo2} and by $d_*(w;1)$ its predual. The sequence $(e_j)_{j=1}^\infty$ of canonical unit vectors is an 1-unconditional basis for $d(w;1)$ \cite[p.\,1643]{albiac1} and the sequence of coordinate functionals $(e^*_j)_{j=1}^\infty$ is an unconditional basis for $d_*(w;1)$ \cite[p.\,1202]{galego}, hence it is an 1-unconditional basis \cite[p.\,19]{lindenstrauss}. We consider $d_*(w;1)$ as a Banach lattice with the order given by the 1-unconditional basis and $d(w;1)$ with its dual structure. By \cite[Theorem 5.4]{castillo2}, $P(e_j^*) \longrightarrow 0$ and $Q(e_j) \longrightarrow 0$ for all continuous homogeneous polynomials $P$ an $Q$ on $d_*(w;1)$ and $d(w;1)$, respectively; in particular both sequences are weakly null. %    Consideremos as sequências formadas pelos vetores canônicos $(e^*_j)_j\subset d_*(w;1)$ e $(e_j)_j\subset d(w;1)$, que sabemos ser $\mathcal{P}$-nulas (veja \cite{castillo2}), em particular $\mathcal{P}^r$-nulas.
Let us see that the sequence  $(e_j\otimes e^*_j)_{j=1}^\infty$ fails to be weakly null in $d(w;1)\widehat{\otimes}_{|\pi|}d_*(w;1)$. From \cite[Theorem 3.2, p.\,204]{schaefer2} we know that $(d(w;1)\widehat{\otimes}_{|\pi|}d_*(w;1))^*$ is lattice isometric to the space $\mathcal{L}^r(d(w;1);d(w;1))$ of regular linear operators from $d(w;1)$ to itself. Considering the identity operator $id_{d(w;1)}\in {\cal L}^r(d(w;1);d(w;1))$ we have that $id_{d(w;1)}(e_j)(e^*_j)=1$ for every $j\in\mathbb{N}$, thus there exists a functional  $\varphi\in(d(w;1)\widehat{\otimes}_{|\pi|}d_*(w;1))^*$ such that $1=\varphi(e_j\otimes e^*_j)\not\longrightarrow 0$.
 \end{example}
 % concerns weakly null sequences whose tensor product is notthe possibility of the tensor product of weakly null sequences

\begin{lemma}\label{seqwnulproten} Let $E_1,\ldots,E_n$ be Banach lattices, all but at most one with the   w-DPP. If $(x^i_j)_{j=1}^\infty$ is a positive weakly null sequence in $E_i$,  $i=1,\ldots,n$, then the sequence $(x^1_j\otimes\cdots\otimes x^n_j)_{j=1}^\infty$ is weakly null in the positive projective tensor product  $E_1\widehat{\otimes}_{|\pi|}\cdots \widehat{\otimes}_{|\pi|}E_n$.
\end{lemma}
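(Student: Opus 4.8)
The plan is to use the description of the dual of the positive Fremlin projective tensor product: $(E_1\widehat{\otimes}_{|\pi|}\cdots \widehat{\otimes}_{|\pi|}E_n)^*$ is lattice isometric to the space of regular $n$-linear forms on $E_1\times\cdots\times E_n$ (for $n=2$ this is \cite[Theorem 3.2, p.\,204]{schaefer2}, and the general case follows by iteration, or directly from \cite{fremlin1}). Thus, to prove that $(x^1_j\otimes\cdots\otimes x^n_j)_{j=1}^\infty$ is weakly null, it suffices to check that $A(x^1_j,\ldots,x^n_j)\longrightarrow 0$ for every regular $n$-linear form $A$ on $E_1\times\cdots\times E_n$, and since such a form is a difference of two positive ones, only positive forms $A$ need be considered. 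I would argue by induction on $n$; the case $n=1$ is exactly the hypothesis that $(x^1_j)_{j=1}^\infty$ is weakly null in $E_1$.

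For the inductive step, reorder the factors so that, if one of the lattices fails the w-DPP, it is $E_n$; then $E_1$ has the w-DPP and $(E_2,\ldots,E_n)$ is again a list all of whose members but at most one (namely $E_n$) have the w-DPP. Set $Z:=E_2\widehat{\otimes}_{|\pi|}\cdots \widehat{\otimes}_{|\pi|}E_n$ and $y_j:=x^2_j\otimes\cdots\otimes x^n_j\in Z$. By the induction hypothesis $(y_j)_{j=1}^\infty$ is weakly null in $Z$, and it is positive, being a sequence of tensor products of positive vectors. Given a positive $n$-linear form $A$ on $E_1\times\cdots\times E_n$, for each fixed $x\geq 0$ in $E_1$ the form $A(x,\cdot,\ldots,\cdot)$ is a positive $(n-1)$-linear form on $E_2\times\cdots\times E_n$, hence extends to a positive functional on $Z$; this produces a positive (thus bounded) operator $S\colon Z\longrightarrow E_1^*$ with $A(x^1_j,\ldots,x^n_j)=\langle S(y_j),x^1_j\rangle$ for every $j$. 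As $S$ is weak-to-weak continuous, $(S(y_j))_{j=1}^\infty$ is weakly null in $E_1^*$, and it is positive because $S\geq 0$ and $y_j\geq 0$.

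We have reached a positive weakly null sequence $(x^1_j)_{j=1}^\infty$ in $E_1$ and a positive weakly null sequence $(S(y_j))_{j=1}^\infty$ in $E_1^*$, with $E_1$ having the w-DPP, and the plan is to close the induction by invoking the characterization of the w-DPP according to which, for a Banach lattice $G$ with the w-DPP, $\langle \psi_j,z_j\rangle\longrightarrow 0$ whenever $(z_j)_{j=1}^\infty$ is a positive weakly null sequence in $G$ and $(\psi_j)_{j=1}^\infty$ is a weakly null sequence in $G^*$; this gives $A(x^1_j,\ldots,x^n_j)=\langle S(y_j),x^1_j\rangle\longrightarrow 0$. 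I expect this last ingredient to be the main obstacle: one must pass from the operator form of the w-DPP (weakly compact operators are almost Dunford--Pettis, i.e.\ they send disjoint weakly null sequences to norm null ones) to the sequential statement just used, and it is here that the positivity of the sequences $(x^i_j)$ is essential. Concretely, I would consider the weakly compact operator $x\mapsto(\langle S(y_k),x\rangle)_{k=1}^\infty$ from $E_1$ into $c_0$, note that $A(x^1_j,\ldots,x^n_j)$ is its $j$-th coordinate evaluated at $x^1_j$, and reduce to disjoint weakly null sequences by a disjointification of the positive weakly null sequence $(x^1_j)_{j=1}^\infty$. A further, more routine, point to settle is the identification of the dual of the positive Fremlin tensor product with the regular multilinear forms, together with the fact that positive multilinear forms extend to positive functionals on that tensor product.
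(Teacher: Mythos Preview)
Your argument is correct and follows essentially the same route as the paper's proof: induction on $n$, the identification of the dual of the Fremlin tensor product with regular operators (equivalently, regular multilinear forms), weak-to-weak continuity of the induced operator, and the sequential characterization of the w-DPP. The only cosmetic differences are that the paper places the possibly-bad lattice at position $1$ and groups $E_1,\ldots,E_{n-1}$ into $X$ (using the w-DPP of $E_n$), while you place it at position $n$ and group $E_2,\ldots,E_n$ into $Z$ (using the w-DPP of $E_1$); and the paper works with an arbitrary $\varphi$ and its associated regular operator $T_\varphi\in\mathcal{L}^r(X;E_n^*)$ rather than first reducing to positive $A$. Regarding the obstacle you flag---passing from the operator form of the w-DPP to the statement $\langle\psi_j,z_j\rangle\to 0$ for positive weakly null $(z_j)$ and weakly null $(\psi_j)$---the paper simply invokes \cite[Corollary 2.6]{aqzzouz}, which provides exactly this characterization, so no ad hoc disjointification argument is needed.
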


\begin{proof} By the associativity of the Fremlin positive projective tensor product \cite[Corollary 1G]{fremlin1} we can assume that $E_1$ is the one that may lack the w-DPP. We shall proceed by induction on $n$. There is nothing to do in the case $n=1$. Suppose that the result holds for $n-1$, that is, $(x^1_j\otimes\cdots\otimes x^{n-1}_j)_{j=1}^\infty$ is weakly null in $E_1\widehat{\otimes}_{|\pi|}\cdots \widehat{\otimes}_{|\pi|}E_{n-1}$.
%Vejamos a veracidade para $n$.
Calling $X= E_1\widehat{\otimes}_{|\pi|}\cdots \widehat{\otimes}_{|\pi|}E_{n-1}$, the associativity and \cite[Theorem 3.2, p.\,204]{schaefer2} give that %. Pela associatividade do produto tensorial projetivo (veja Teorema \ref{ptppass})  temos
%$$
%X\widehat{\otimes}_{|\pi|}E_n = %(E_1\widehat{\otimes}_{|\pi|}\cdots\widehat{\otimes}_{|\pi|}E_{n-1})\widehat{\otimes}_{|\pi|}E_n %=
%E_1\widehat{\otimes}_{|\pi|}\cdots\widehat{\otimes}_{|\pi|}E_{n-1}\widehat{\otimes}_{|\pi|}E_n.
%$$
%Segue do Teorema \ref{identdualptpp} que $
$(X\widehat{\otimes}_{|\pi|}E_n)^*$ is lattice isometric to $\mathcal{L}^r(X;E^*_n)$ by means of the obvious isomorphism, meaning that for every $\varphi\in(X\widehat{\otimes}_{|\pi|}E_n)^*$ there exists a regular operator $T_\varphi\in \mathcal{L}^r(X;E^*_n)$ such that
$$
\varphi(x^1\otimes\cdots\otimes x^{n-1}\otimes x^n)=T_\varphi(x^1\otimes\cdots\otimes x^{n-1})(x^n)
$$
for all $x^1\otimes\cdots\otimes x^{n-1}\in E_1\widehat{\otimes}_{|\pi|}\cdots\widehat{\otimes}_{|\pi|}E_{n-1}$ and $x^n\in E_n$. The weak-to-weak continuity of $T_\varphi$ gives that the sequence
%Por hipótese (de indução) temos que $(x^1_j\otimes\cdots\otimes x^{n-1}_j)_j$ é fracamente nula em $E_1\widehat{\otimes}_{|\pi|}\cdots \widehat{\otimes}_{|\pi|}E_{n-1}$. Como $T_\varphi$ é contínuo (veja Teorema \ref{opregcont}), então $T$ é $w$-$w$-contínuo e segue que
$(T_\varphi(x^1_j\otimes\cdots\otimes x^{n-1}_j))_{j=1}^\infty$ is weakly null in $E^*_n$. From the w-DPP of $E_n$, together with the characterizations of the w-DPP proved in \cite[Corollary 2.6]{aqzzouz}, it follows that%  , segue das caracterizações apresentadas no Teorema \ref{carwdpp} que,
$$\varphi(x^1_j\otimes\cdots\otimes x^n_j)= T_\varphi(x^1_j\otimes\cdots\otimes x^{n-1}_j)(x^n_j)\longrightarrow 0,
$$
establishing that $(x^1_j\otimes\cdots\otimes x^n_j)_{j=1}^\infty$ is weakly null in  $E_1\widehat{\otimes}_{|\pi|}\cdots \widehat{\otimes}_{|\pi|}E_n$.
\end{proof}

%Before going further, let us get a byproduct of the lemma above.

\begin{proposition} \label{seqfracnulptpp}
Let $(x_j)_{j=1}^\infty$ be a positive weakly null sequence in a Banach lattice $E$ with the w-DPP. Then  $P(x_j) \longrightarrow 0$ for every regular homogeneous polynomial $P$ on $E$.
\end{proposition}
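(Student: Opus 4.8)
The plan is to linearize $P$ by passing to the $n$-fold Fremlin positive projective tensor product and then to apply Lemma \ref{seqwnulproten}. Since $P(x_j)\to 0$ is stable under taking differences of polynomials of the same degree, and every regular homogeneous polynomial is a difference of two positive homogeneous polynomials, it suffices to treat the case where $P$ is a positive $n$-homogeneous polynomial on $E$ (the case $n=1$ being trivial, as $(x_j)_{j=1}^\infty$ is weakly null). Let $A\colon E\times\cdots\times E\longrightarrow\mathbb{R}$ ($n$ factors) be the symmetric $n$-linear form associated to $P$; by definition of a positive polynomial $A$ is positive, hence regular and continuous (cf.\ \cite[Proposition 4.1]{grecu}, \cite{bu, loane1}), and $P(y)=A(y,\ldots,y)$ for all $y\in E$.

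Next I would invoke the universal property of the Fremlin positive projective tensor product. By \cite[Theorem 3.2, p.\,204]{schaefer2} together with the associativity in \cite[Corollary 1G]{fremlin1} (exactly as used in the proof of Lemma \ref{seqwnulproten}), $\big(E\widehat{\otimes}_{|\pi|}\cdots\widehat{\otimes}_{|\pi|}E\big)^*$ is lattice isometric to the space of regular $n$-linear forms on $E\times\cdots\times E$; concretely, there is a positive functional $\varphi\in\big(E\widehat{\otimes}_{|\pi|}\cdots\widehat{\otimes}_{|\pi|}E\big)^*$ with $\varphi(y^1\otimes\cdots\otimes y^n)=A(y^1,\ldots,y^n)$ for all $y^1,\ldots,y^n\in E$. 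In particular $P(y)=\varphi(y\otimes\cdots\otimes y)$ for every $y\in E$. Now apply Lemma \ref{seqwnulproten} with $E_1=\cdots=E_n=E$: since $E$ has the w-DPP, all (in particular all but at most one) of the factors have it, and the positive weakly null sequence $(x_j)_{j=1}^\infty$, placed in each factor, yields that $(x_j\otimes\cdots\otimes x_j)_{j=1}^\infty$ is weakly null in $E\widehat{\otimes}_{|\pi|}\cdots\widehat{\otimes}_{|\pi|}E$. Consequently $P(x_j)=\varphi(x_j\otimes\cdots\otimes x_j)\longrightarrow 0$, which is the desired conclusion.

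The only genuinely delicate point is the identification of the dual of the $n$-fold positive projective tensor product with the regular $n$-linear forms and the attendant verification that the linearization of the (positive, symmetric) form $A$ is a bounded functional on $E\widehat{\otimes}_{|\pi|}\cdots\widehat{\otimes}_{|\pi|}E$; this, however, is part of the standard theory of Fremlin tensor products and of regular multilinear forms on Banach lattices (\cite{fremlin1, bu, loane1}), and once it is in place the argument reduces to a single substitution. Everything else — the reduction to positive polynomials and the final passage to the limit — is routine.
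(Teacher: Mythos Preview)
Your proof is correct and follows essentially the same route as the paper's: linearize the polynomial through the $n$-fold Fremlin positive projective tensor product to obtain a bounded functional $\varphi$, apply Lemma~\ref{seqwnulproten} to conclude that $(x_j\otimes\cdots\otimes x_j)_{j=1}^\infty$ is weakly null, and evaluate. The only minor differences are that the paper works directly with an arbitrary regular polynomial (citing \cite[Proposition~3.3]{bu} for the duality $(\widehat{\otimes}_{n,|\pi|}E)^*\cong\mathcal{L}^r(^nE)$) rather than first reducing to positive polynomials, so your preliminary reduction is harmless but unnecessary.
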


\begin{proof} Given $n \in \mathbb{N}$ and $P \in {\cal P}^r(^nE)$, by $\check{P} \in {\cal L}^r(^nE)$ we denote the symmetric regular $n$-linear form associated to $P$. By
\cite[Proposition 3.3]{bu} there is a functional $\varphi \in (\widehat{\otimes}_{n,|\pi|}E)^*$ such that $$\varphi(x_1 \otimes \cdots \otimes x_n) = \check{P}(x_1, \ldots, x_n)$$
for all $x_1, \ldots, x_n \in E$. By Lemma \ref{seqwnulproten} we know that the sequence $(\otimes^n x_j)_{j=1}^\infty$ is weakly null in $\widehat{\otimes}_{n,|\pi|}E$, so
$$P(x_j) = \check{P}(x_j, \ldots, x_j) = \varphi(\otimes^nx_j) \longrightarrow 0. $$
\end{proof}

The main result of this section reads as follows.

\begin{theorem}\label{lth} A Banach lattice has the positive Schur property if and only if it has the weak Dunford-Pettis and the positive polynomial Schur properties.
\end{theorem}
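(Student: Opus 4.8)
The idea is to obtain the equivalence by assembling results already established, handling the two implications separately.

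For the forward implication, suppose $E$ has the positive Schur property. That $E$ is then a PPS lattice is exactly Example \ref{L1Plam}(a), since $E^{*}=\mathcal{P}^{r}({}^{1}E)$. It remains to verify the weak Dunford--Pettis property. Given a weakly compact (indeed, any bounded) operator $T$ from $E$ into a Banach space and a weakly null sequence $(x_{j})_{j=1}^{\infty}$ in $E$ of pairwise disjoint vectors, I would write $x_{j}=x_{j}^{+}-x_{j}^{-}$ and use the standard lattice fact that the positive and negative parts of a disjoint weakly null sequence again form (disjoint) weakly null sequences (equivalently, that $(|x_{j}|)_{j=1}^{\infty}$ is weakly null; see, e.g., \cite{meyer}). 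Then $(x_{j}^{+})_{j=1}^{\infty}$ and $(x_{j}^{-})_{j=1}^{\infty}$ are positive weakly null sequences, so the positive Schur property forces $\|x_{j}^{+}\|\to 0$ and $\|x_{j}^{-}\|\to 0$, hence $\|x_{j}\|\to 0$ and a fortiori $\|Tx_{j}\|\to 0$. Thus $E$ has the w-DPP.

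For the converse, suppose $E$ has the w-DPP and the positive polynomial Schur property, and let $(x_{j})_{j=1}^{\infty}$ be a positive weakly null sequence in $E$. By Proposition \ref{seqfracnulptpp} --- which is exactly where the w-DPP is used --- we have $P(x_{j})\longrightarrow 0$ for every regular homogeneous polynomial $P$ on $E$. Since $(x_{j})_{j=1}^{\infty}$ is a positive sequence on which every regular homogeneous polynomial tends to $0$, the definition of the positive polynomial Schur property yields $\|x_{j}\|\longrightarrow 0$. As $(x_{j})_{j=1}^{\infty}$ was an arbitrary positive weakly null sequence, $E$ has the positive Schur property.

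The non-trivial content is carried by Proposition \ref{seqfracnulptpp}, so the backward implication is immediate; the only step requiring care is the forward passage from the positive Schur property to the w-DPP. The mild obstacle there is that the w-DPP concerns \emph{arbitrary} disjoint weakly null sequences while the positive Schur property speaks only of positive ones; this is bridged by the lattice-theoretic splitting of a disjoint weakly null sequence into two positive weakly null sequences. One could alternatively invoke a characterization of the w-DPP phrased in terms of positive disjoint weakly null sequences, in the spirit of \cite{aqzzouz}, which would make this step immediate as well. Everything else is a direct appeal to the definitions and to Example \ref{L1Plam}(a).
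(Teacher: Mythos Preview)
Your proof is correct and follows essentially the same route as the paper's. The converse implication is verbatim the paper's argument via Proposition~\ref{seqfracnulptpp}; for the forward implication the paper simply invokes \cite[Corollary 2.6]{aqzzouz} to get the w-DPP, whereas you supply the elementary lattice argument (splitting a disjoint weakly null sequence into its positive and negative parts) and then note the \cite{aqzzouz} alternative yourself --- so the two presentations differ only cosmetically.
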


\begin{proof} Suppose that $E$ has the positive Schur property. Using that weakly null sequences are bounded, the characterizations proved in \cite[Corollary 2.6]{aqzzouz} imply immediately that $E$ has the w-DPP. By Example \ref{L1Plam}(a) we know that $E$ is a PPS lattice.

Conversely, suppose that $E$ is a PPS lattice with the w-DPP and let  $(x_j)_{j=1}^\infty E$ be a positive weakly null sequence in $E$. % uma sequência  positiva fracamente nula.
Given $n\in \mathbb{N}$ and $P\in\mathcal{P}^r(^nE)$, calling on Proposition \ref{seqfracnulptpp} we have $P(x_j)\longrightarrow 0$. From the PPS property of $E$ it follows that $x_j\longrightarrow 0$. %Concluímos com isso que $E$ tem a PSP.
\end{proof}

Now we derive a few more consequences of Lemma \ref{seqwnulproten} and of Proposition \ref{seqfracnulptpp}. Combining the lemma with Example \ref{exlorentz} we get the following.

\begin{corollary} Both the Lorentz space $d(w;1)$ and its predual $d_*(w;1)$ fail the weak Dunford-Pettis property.
\end{corollary}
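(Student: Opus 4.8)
The plan is to deduce this directly from Lemma \ref{seqwnulproten} and Example \ref{exlorentz} by a short contradiction argument; no calculation is needed. First I would recall the content of Example \ref{exlorentz}: the canonical unit vectors $(e_j)_{j=1}^\infty$ form a positive weakly null sequence in $d(w;1)$, the coordinate functionals $(e_j^*)_{j=1}^\infty$ form a positive weakly null sequence in $d_*(w;1)$, while the sequence of elementary tensors $(e_j\otimes e_j^*)_{j=1}^\infty$ fails to be weakly null in $d(w;1)\widehat{\otimes}_{|\pi|}d_*(w;1)$ (witnessed there by the functional coming from the identity operator on $d(w;1)$).

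Next I would argue by contradiction: suppose that at least one of the two Banach lattices $d(w;1)$ and $d_*(w;1)$ has the weak Dunford-Pettis property. Then the pair $E_1=d(w;1)$, $E_2=d_*(w;1)$ satisfies the hypothesis of Lemma \ref{seqwnulproten} in the case $n=2$, since ``all but at most one'' of the two factors then has the w-DPP. Applying the lemma to the positive weakly null sequences $(e_j)_{j=1}^\infty\subseteq d(w;1)$ and $(e_j^*)_{j=1}^\infty\subseteq d_*(w;1)$ yields that $(e_j\otimes e_j^*)_{j=1}^\infty$ is weakly null in $d(w;1)\widehat{\otimes}_{|\pi|}d_*(w;1)$, contradicting Example \ref{exlorentz}. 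Hence neither $d(w;1)$ nor $d_*(w;1)$ can have the weak Dunford-Pettis property.

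Since every step is an invocation of results already in hand, there is no real obstacle here; the only point requiring attention is verifying that the hypotheses of Lemma \ref{seqwnulproten} are genuinely met, i.e.\ that $(e_j)$ and $(e_j^*)$ are \emph{positive} and weakly null in the respective lattices. This is exactly what Example \ref{exlorentz} supplies: positivity is immediate because these are the canonical basis vectors and coordinate functionals relative to $1$-unconditional bases (with the orders chosen accordingly), and weak nullity follows from their being polynomially null, hence in particular annihilated by every continuous linear functional.
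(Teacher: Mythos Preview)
Your proposal is correct and follows precisely the approach the paper intends: the paper's entire argument is the single remark ``Combining the lemma with Example \ref{exlorentz} we get the following,'' and your contradiction argument is exactly the unpacking of that combination. Your care in checking that $(e_j)$ and $(e_j^*)$ are positive weakly null so that the hypotheses of Lemma \ref{seqwnulproten} apply is appropriate and matches what Example \ref{exlorentz} provides.
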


Let $E_1, \ldots, E_n$ be Banach lattices lacking the positive Schur property. It is easy to see that the positive projective tensor product $E_1\widehat{\otimes}_{|\pi|}\cdots \widehat{\otimes}_{|\pi|}E_n$ fails the positive Schur property (see \cite{botelho}). So, there exists a positive non-norm null weakly null sequence in $E_1\widehat{\otimes}_{|\pi|}\cdots \widehat{\otimes}_{|\pi|}E_n$. For practical purposes, it would be very helpful if we could suppose that this sequence lies in the Segre cone of $E_1 \otimes \cdots \otimes E_n$, that is, if it is of the form $(x^1_j\otimes\cdots\otimes x^n_j)_{j=1}^\infty$, $x_j^i \in E_i$, $i=1,\ldots,n$. Let us see that this is possible if, in addition, the lattices have the w-DPP. In view of Theorem \ref{lth} this is same of asking the lattices to have the w-DPP and not to be PPS lattices.

\begin{proposition}\label{lastp} Let $E_1, \ldots, E_n$ be Banach lattices with the w-DPP and  lacking the positive Schur property. Then there are positive sequences $(x_j^i)_{j=1}^\infty$ in $E_i$, $i= 1,\ldots, n$, such that $(x^1_j\otimes\cdots\otimes x^n_j)_{j=1}^\infty$ is a weakly null non-norm null sequence in $E_1\widehat{\otimes}_{|\pi|}\cdots \widehat{\otimes}_{|\pi|}E_n$.
\end{proposition}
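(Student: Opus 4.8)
The plan is to build the desired elementary-tensor sequence coordinate by coordinate, get weak nullity for free from Lemma \ref{seqwnulproten}, and get non-norm-nullity by testing against a product functional.

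First I would exploit the failure of the positive Schur property: for each $i \in \{1,\dots,n\}$ fix a positive weakly null sequence $(x^i_j)_{j=1}^\infty$ in $E_i$ that is not norm null. Passing successively to subsequences — first extract a subsequence along which $\|x^1_j\|$ stays bounded below by some $\varepsilon_1>0$, then a further subsequence handling the second coordinate, and so on, finally relabelling — I may assume there are constants $\varepsilon_i>0$ with $\|x^i_j\|\ge \varepsilon_i$ for all $i$ and $j$. Since every $E_i$ has the w-DPP (so, a fortiori, all but at most one of them do), Lemma \ref{seqwnulproten} applies and yields that $(x^1_j\otimes\cdots\otimes x^n_j)_{j=1}^\infty$ is weakly null in $E_1\widehat{\otimes}_{|\pi|}\cdots \widehat{\otimes}_{|\pi|}E_n$.

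It remains to check that this sequence is not norm null, which I would do by exhibiting, for each $j$, a norm-one functional on the tensor product that is bounded away from $0$ on $x^1_j\otimes\cdots\otimes x^n_j$. Recall that a positive element $x$ of a Banach lattice satisfies $\|x\| = \sup\{\varphi(x) : \varphi\ge 0,\ \|\varphi\|\le 1\}$, with the supremum attained (by weak-$*$ compactness of the positive part of the unit ball of the dual, which is itself a Banach lattice). So for each $i$ and $j$ pick a positive $\phi^i_j\in E_i^*$ with $\|\phi^i_j\|\le 1$ and $\phi^i_j(x^i_j)=\|x^i_j\|\ge\varepsilon_i$. The map $(y^1,\dots,y^n)\mapsto \phi^1_j(y^1)\cdots\phi^n_j(y^n)$ is a positive $n$-linear form, hence regular, with regular norm $\prod_i\|\phi^i_j\|\le 1$. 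Iterating \cite[Theorem 3.2, p.\,204]{schaefer2} together with the associativity of the Fremlin positive projective tensor product \cite[Corollary 1G]{fremlin1} — exactly as in the proof of Lemma \ref{seqwnulproten}, and in the spirit of \cite[Proposition 3.3]{bu} — this $n$-linear form corresponds to a functional $\varphi_j\in(E_1\widehat{\otimes}_{|\pi|}\cdots \widehat{\otimes}_{|\pi|}E_n)^*$ with $\|\varphi_j\|\le 1$ and $\varphi_j(x^1_j\otimes\cdots\otimes x^n_j)=\prod_{i=1}^n\phi^i_j(x^i_j)\ge\prod_{i=1}^n\varepsilon_i>0$. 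Therefore $\|x^1_j\otimes\cdots\otimes x^n_j\|_{|\pi|}\ge\prod_{i=1}^n\varepsilon_i$ for every $j$, so the sequence is not norm null. (What the functional argument really shows is the identity $\|u^1\otimes\cdots\otimes u^n\|_{|\pi|}=\|u^1\|\cdots\|u^n\|$ for positive $u^i$, which could be quoted directly instead.)

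The only point requiring genuine care is this last part: one must set up the duality between $(E_1\widehat{\otimes}_{|\pi|}\cdots \widehat{\otimes}_{|\pi|}E_n)^*$ and the space of regular $n$-linear forms by iterating \cite[Theorem 3.2]{schaefer2} and associativity, verify that the diagonal tensor $\phi^1_j\otimes\cdots\otimes\phi^n_j$ of positive functionals is a positive $n$-linear form with regular norm $\prod_i\|\phi^i_j\|$, and invoke the standard existence of positive norm-attaining functionals on positive elements. Everything else is bookkeeping: weak nullity is delivered by Lemma \ref{seqwnulproten}, and the simultaneous lower bounds $\|x^i_j\|\ge\varepsilon_i$ come from an iterated passage to subsequences, which is harmless since the statement only asserts existence of such sequences.
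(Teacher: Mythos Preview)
Your proof is correct and follows essentially the same route as the paper: pick positive weakly null non-norm-null sequences, pass to subsequences with norms bounded below, invoke Lemma~\ref{seqwnulproten} for weak nullity, and bound the tensor norms from below. The only cosmetic difference is that the paper uses a single constant $\delta$ and simply quotes the identity $\|x^1_j\otimes\cdots\otimes x^n_j\|_{|\pi|}=\|x^1_j\|\cdots\|x^n_j\|$ outright, whereas you reprove the needed inequality via product functionals (as you yourself note in the parenthetical remark).
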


\begin{proof} Take positive weakly null non-norm null sequences $(y_j^i)_{j=1}^\infty$ in $E_i$, $i= 1,\ldots, n$. There are $\delta > 0$ and subsequences $(x_j^i)_{j=1}^\infty$ of $(y_j^i)_{j=1}^\infty$ such that $\|x_j^i\| \geq \delta$ for all $j \in \mathbb{N}$ and $i = 1, \ldots, n$. Of course, each $(x_j^i)_{j=1}^\infty$ is positive weakly null in $E_i$, so $(x^1_j\otimes\cdots\otimes x^n_j)_{j=1}^\infty$ is weakly null in $E_1\widehat{\otimes}_{|\pi|}\cdots \widehat{\otimes}_{|\pi|}E_n$ by Lemma \ref{seqwnulproten}. This sequence is not norm null because $\|x^1_j\otimes\cdots\otimes x^n_j\|_{|\pi|} = \|x_j^1\| \cdots \|x_j^n\| \geq \delta^n$ for every $j$.
\end{proof}

The associativity of the full tensor product was used twice in the proof of Lemma \ref{seqwnulproten}. We finish the paper using Proposition \ref{seqfracnulptpp} to show that, despite the lack of associativity in the symmetric tensor product, a symmetric version of Proposition \ref{lastp} holds. For the $n$-fold positive symmetric projective tensor product $\widehat{\otimes}^n_{s,|\pi|}E$ of  the Banach lattice $E$, see \cite{bu}.

\begin{proposition} Let $E$ be a Banach lattice with the w-DPP and lacking the positive Schur property.  Then there exists a positive sequence $(x_j)_{j=1}^\infty$ in $E$ such that, for every $n \in \mathbb{N}$, $(\otimes^nx_j)_{j=1}^\infty$ is a weakly null non-norm null sequence in $\widehat{\otimes}^n_{s,|\pi|}E$.
\end{proposition}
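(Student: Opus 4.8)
**The plan is to combine Proposition \ref{seqfracnulptpp} with the monomial-type functionals coming from the symmetric tensor product.** Start by choosing, exactly as in the proof of Proposition \ref{lastp}, positive weakly null non-norm null sequences and passing to subsequences to obtain a single positive weakly null sequence $(x_j)_{j=1}^\infty$ in $E$ with $\|x_j\| \geq \delta$ for all $j$ and some $\delta > 0$. Since the norm on $\widehat{\otimes}^n_{s,|\pi|}E$ satisfies $\|\otimes^n x_j\| = \|x_j\|^n \geq \delta^n$ (the elementary tensor $\otimes^n x_j$ of a positive vector has symmetric projective norm equal to $\|x_j\|^n$; see \cite{bu}), the sequence $(\otimes^n x_j)_{j=1}^\infty$ is non-norm null for every $n$. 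So the only thing to prove is that $(\otimes^n x_j)_{j=1}^\infty$ is weakly null in $\widehat{\otimes}^n_{s,|\pi|}E$ for each fixed $n$.

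For this, fix $n$ and let $\psi \in (\widehat{\otimes}^n_{s,|\pi|}E)^*$. The plan is to identify $\psi$ with a regular $n$-homogeneous polynomial. Indeed, by the duality between the positive symmetric projective tensor product and regular homogeneous polynomials (the symmetric analogue of \cite[Proposition 3.3]{bu} used in the proof of Proposition \ref{seqfracnulptpp}), there is a polynomial $P \in \mathcal{P}^r(^nE)$ with $\psi(\otimes^n x) = P(x)$ for all $x \in E$. Since $E$ has the w-DPP and $(x_j)_{j=1}^\infty$ is positive weakly null, Proposition \ref{seqfracnulptpp} gives $P(x_j) \longrightarrow 0$, that is, $\psi(\otimes^n x_j) \longrightarrow 0$. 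As $\psi$ was arbitrary, $(\otimes^n x_j)_{j=1}^\infty$ is weakly null in $\widehat{\otimes}^n_{s,|\pi|}E$. This holds for every $n$, completing the proof.

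\textbf{The main obstacle} is making sure the duality $(\widehat{\otimes}^n_{s,|\pi|}E)^* \cong \mathcal{P}^r(^nE)$ is legitimate in the form I need: one must check that every continuous functional on $\widehat{\otimes}^n_{s,|\pi|}E$ restricts to a \emph{regular} (not merely continuous) $n$-homogeneous polynomial, and conversely that every regular polynomial extends. This is precisely the content invoked implicitly in Proposition \ref{seqfracnulptpp} via \cite{bu}, so I would simply cite the same source rather than reprove it; the symmetric case is treated there alongside the full tensor product. The rest is routine: the norm computation for elementary symmetric tensors and the selection of subsequences are both standard, and one does not even need associativity here (which is the point flagged in the preceding paragraph of the paper), because a single sequence $(x_j)_{j=1}^\infty$ works simultaneously for all $n$.
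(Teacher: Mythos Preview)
Your proof is correct and follows essentially the same route as the paper: pick a positive weakly null sequence bounded below in norm, use the duality $(\widehat{\otimes}^n_{s,|\pi|}E)^* \cong \mathcal{P}^r(^nE)$ from \cite{bu} (the paper cites it as Proposition~3.4 there), and then invoke Proposition~\ref{seqfracnulptpp} to get $\psi(\otimes^n x_j)=P(x_j)\to 0$, while the norm identity $\|\otimes^n x_j\|_{|\pi|}=\|x_j\|^n$ handles the non-norm-null part. The only cosmetic difference is that you frame the selection of $(x_j)$ via Proposition~\ref{lastp}, whereas the paper simply invokes the failure of the positive Schur property directly.
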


\begin{proof} We can consider a positive weakly null non-norm null sequence $(x_j)_{j=1}^\infty$ in $E$ and $\delta > 0$ such that $\|x_j\| \geq \delta$ for every $j$. Given $n \in \mathbb{N}$, Proposition \ref{seqfracnulptpp} gives that $P(x_j) \longrightarrow 0$ for every regular $n$-homogeneous polynomial $P$ on $E$. Given $\varphi \in (\widehat{\otimes}^n_{s,|\pi|}E)^*$, by \cite[Proposition 3.4]{bu} there exists a regular $n$-homogeneous polynomial $P$ on $E$ such that $\varphi(\otimes^n x) = P(x)$ for every $x \in E$. It follows that the sequence $(\otimes^nx_j)_{j=1}^\infty$ is weakly null in $\widehat{\otimes}^n_{s,|\pi|}E$. It is not norm null because $\|\hspace*{-0.15em}\otimes^n x_j\|_{|\pi|}= \|x_j\|^n \geq \delta^n$ for every $j$.
\end{proof}

For the role of the Segre cone in the modern theory of classes of multilinear operators, see \cite{mexicanos1, mexicanos2}.

%\medskip
%
%\noindent{\bf Open problem.} Due to the lack of associativity in the symmetric tensor product, we do not know if symmetric analogues of Lemma \ref{seqwnulproten} and Proposition \ref{lastp} hold. For instance: is it true that if $E$ is a Banach lattice with the w-DPP and lacking the positive Schur property, then there exists a positive sequence $(x_j)_{j=1}^\infty$ in $E$ such that $(\otimes^nx_j)_{j=1}^\infty$ is a weakly null non-norm null sequence in the $n$-fold positive symmetric projective tensor product $\widehat{\otimes}^n_{s,|\pi|}E$? For the positive symmetric projective tensor product of Banach lattices, see \cite{bu}.
%
%
%\textcolor{red}{Tô achando algo estranho nessa questão aberta. Pra mim a Proposition 4.3 nos garante um análogo do Lemma 4.2 para o caso simétrico, e consequentemente um análogo da Proposition 4.6. O argumento para o análogo da Proposition 4.6 seria:
%Se tomarmos um reticulado de Banach $E$ com a w-DPP e $(x_j)_j$ uma sequência positiva fracamente nula em $E$ que não converge para zero. Pelo mesmo argumento utilizado na Proposition 4.6 podemos assumir que $\|x_j\|\geq \delta$ para todo $j\in\mathbb{N}$ e algum $\delta>0$. Temos pela Proposition 4.3 que $P(x_j)\longrightarrow 0$ para todo $P\in\mathcal{P}^r(^nE)$, $n\in\mathbb{N}$, e como $\mathcal{P}^r(^nE)=(\widehat{\otimes}_{n,s,|\pi|}E)^*$ temos que $(\otimes^nx_j)_j$ é fracamente nula em $\widehat{\otimes}_{n,s,|\pi|}E$. Temos ainda que $\|\otimes^nx_j\|_{s,|\pi|}=\|x_j\|\stackrel{n}{\cdots}\|x_j\|\geq \delta^n$.
%}

\bigskip

\noindent Faculdade de Matem\'atica~~~~~~~~~~~~~~~~~~~~~~Departamento de Matem\'atica\\
Universidade Federal de Uberl\^andia~~~~~~~~ IMECC-UNICAMP\\
38.400-902 -- Uberl\^andia -- Brazil~~~~~~~~~~~~ 13.083-859 -- Campinas -- Brazil\\
e-mail: botelho@ufu.br ~~~~~~~~~~~~~~~~~~~~~~~~~e-mail: lucasvt09@hotmail.com

\end{document}